\documentclass{amsart}
\usepackage[T1]{fontenc}
\usepackage[utf8]{inputenc}
\usepackage{lmodern}
\usepackage{amsmath,amssymb,amsthm,mathtools,mathabx}
\usepackage{bm}
\usepackage[margin=1.2in]{geometry}
\usepackage{microtype}
\usepackage{tikz}
\usetikzlibrary{cd}
\usepackage{multicol}
\usepackage{array}
\usepackage[shortlabels]{enumitem}
\usepackage[hyphens]{url} % Verbatim with URL-sensitive line breaks
\usepackage[unicode,colorlinks,
 	citecolor=blue,
 	urlcolor=blue,
 	linkcolor=blue
        ]{hyperref} % Extensive support for hypertext in LaTeX

\usepackage{nicefrac} % Typeset in-line fractions in a "nice" way

\newcommand\ip[2]{\langle #1, #2 \rangle}
\newcommand\seq[1]{\langle #1 \rangle}

\newcommand{\defeq}{\vcentcolon=}

\DeclareMathOperator{\dom}{dom}
\DeclareMathOperator{\range}{range}

\DeclareMathOperator{\uh}{\upharpoonright}

\def\ZFC{\mathsf{ZFC}}
\def\ZF{\mathsf{ZF}}

\def\AD{\mathsf{AD}}
\def\Dec{\mathsf{Dec}}

\let\0\varnothing

\let\phi\varphi
\let\term\emph

\renewcommand{\epsilon}{\varepsilon}

\newcommand{\concat}{^\smallfrown}

\theoremstyle{plain}
\newtheorem{theorem}{Theorem}

\newtheorem*{claim*}{Claim}
\newtheorem{observation}[theorem]{Observation}

\newtheorem*{example*}{Example}

\newtheorem*{proposition*}{Proposition}

\newtheorem*{fact*}{Fact}

\newtheorem*{conjecture*}{Conjecture}

\newtheorem{lemma}[theorem]{Lemma}
\newtheorem*{lemma*}{Lemma}

\newtheorem*{remark*}{Remark}

\newtheorem{question}[theorem]{Question}
\newtheorem*{question*}{Question}
\theoremstyle{definition}\newtheorem{definition}[theorem]{Definition}
\theoremstyle{definition}
\theoremstyle{definition}

\numberwithin{theorem}{section}

\begin{document}
\title[The Solecki Dichotomy and the Posner-Robinson Theorem]{The Solecki Dichotomy and the Posner-Robinson Theorem are Almost Equivalent}

\author{Patrick Lutz}
\address{University of California, Los Angeles}
\email{pglutz@ucla.edu}

\begin{abstract}
The Solecki dichotomy in descriptive set theory and the Posner-Robinson theorem in computability theory bear a superficial resemblance to each other and can sometimes be used to prove the same results, but do not have any obvious direct relationship. We show that in fact there is such a relationship by formulating slightly weakened versions of the two theorems and showing that, when combined with determinacy principles, each one yields a short proof of the other. This relationship also holds for generalizations of the Solecki dichotomy and the Posner-Robinson theorem to higher levels of the Borel/hyperarithmetic hierarchy.
\end{abstract}

\maketitle

\section{Introduction}

This paper is about the relationship between two theorems: the Solecki dichotomy from descriptive set theory, which says that every Borel function on the reals is either a countable union of continuous functions or at least as complicated as the Turing jump \cite{pawlikowski2012decomposing}, and the Posner-Robinson theorem in computability theory, which says that every real is either computable or looks like $0'$ relative to some oracle \cite{posner1981degrees}. We will give formal statements of both theorems later.

Superficially, these theorems are very similar. Recall that every continuous function on the reals is computable relative to some oracle. So, allowing for some poetic license, we might summarize both theorems as saying that every object of some sort is either computable or at least as complicated as the jump.

However, it is not apparent whether this similarity is more than superficial. In the Solecki dichotomy, the objects under consideration are second-order---functions from the real numbers to the real numbers---while in the Posner-Robinson theorem they are first order---individual real numbers. Note that this difference is distinct from the observation that the Solecki dichotomy is a ``bold-face'' statement while the Posner-Robinson theorem is a ``light-face'' one. Additionally, the superficial analogy seems to suggest that the Solecki dichotomy should simply say that every function is either continuous (rather than a countable union of continuous functions) or at least as complicated as the jump, but this is false.

One indication that there might be something mathematically significant behind this similarity can be found in work by Kihara. First, Kihara observed \cite{lutz2021results} that the Solecki dichotomy could be used to prove a special case of Martin's conjecture that had previously been proved by Slaman and Steel in \cite{slaman1988definable} using the Posner-Robinson theorem. Second, work by Gregoriades, Kihara and Ng \cite{gregoriades2021turing} used a version of the Posner-Robinson theorem to prove results related to the decomposability conjecture that had previously been proved using the Solecki dichotomy \cite{pawlikowski2012decomposing, mottoros2013structure}.

The goal of this paper is to show that this is no accident---there is a meaningful technical relationship between the Solecki dichotomy and the Posner-Robinson theorem. In particular, we will formulate slightly weakened versions of the Solecki dichotomy and the Posner Robinson theorem\footnote{Both of which suffice to prove the results of Kihara and others mentioned above.} and show that each one can be used to give a short proof of the other\footnote{Although for one direction---proving the Posner-Robinson theorem using the Solecki dichotomy---we will need to use $\mathbf{\Pi}^1_1$-determinacy, which is not provable in $\ZFC$.}. The fact that this is possible, along with the details of the proofs, support the view that the Solecki dichotomy is morally (though not literally) a bold-face version of the Posner-Robinson theorem.

There are also generalizations of the Solecki dichotomy and the Posner-Robinson theorem to higher levels of the Borel/hyperarithmetic hierarchy and all of our results go through for these generalizations, with the proofs more or less unchanged. We discuss this further in Section \ref{sec:generalizations}.

In the remainder of the introduction, we will introduce the Solecki dichotomy and the Posner-Robinson theorem, as well as the weakened versions that we will use in this paper. We will also briefly discuss determinacy principles, which provide the main technical tool that we will use in our proofs.

\subsection*{The Solecki dichotomy}

Informally, the Solecki dichotomy says that every sufficiently definable function from reals to reals is either a countable union of continuous functions or at least as complicated as the Turing jump\footnote{Actually, most published statements of the Solecki dichotomy use a function called ``Pawlikowski's function'' in place of the Turing jump, but it is not hard to see that these two versions of the theorem are equivalent}. To state it formally, we must first state precisely what we mean by ``a countable union of continuous functions'' and ``at least as complicated as the Turing jump.''

\begin{definition}
A function $f \colon \omega^\omega \to \omega^\omega$ is \term{$\sigma$-continuous} if there is a partition $\{A_n\}_{n \in \omega}$ of $\omega^\omega$ into countably many pieces such that for each $n$, $f\uh_{A_n}$ is continuous with respect to the subspace topology on $A_n$.
\end{definition}

Note that there is a small subtlety here: just because $f\uh_{A_n}$ is continuous with respect to the subspace topology on $A_n$ does not mean that $f\uh_{A_n}$ can be extended to a continuous function defined on all of $\omega^\omega$. We will also refer to a partial function which is continuous with respect to the subspace topology on its domain as a \term{partial continuous function}.

\begin{definition}
A function $f\colon \omega^\omega \to \omega^\omega$ \term{topologically embeds} into a function $g\colon \omega^\omega \to \omega^\omega$ if there are topological embeddings $\phi, \psi \colon \omega^\omega \to \omega^\omega$ such that $\psi\circ f = g \circ \phi$. In other words, the following diagram commutes.
\begin{center}
\begin{tikzcd}
\omega^\omega \arrow[r, "g"]
& \omega^\omega\\
\omega^\omega \arrow[r, "f"]\arrow[u, "\phi"]
& \omega^\omega\arrow[u, "\psi"]
\end{tikzcd}
\end{center}
\end{definition}

\begin{definition}
The \term{Turing jump} is the function $J \colon \omega^\omega \to \omega^\omega$ defined by
\[
  J(x)(n) \defeq
  \begin{cases}
    0 &\text{if } \Phi^x_n(n)\uparrow\\
    m + 1 &\text{if } \Phi^x_n(n)\downarrow \text{ in exactly $m$ steps}.
  \end{cases}
\]
\end{definition}

Note that our official definition of the Turing jump is slightly different from the standard one, in which $J(x)(n)$ only indicates whether or not $\Phi^x_n(n)$ converges, not how many steps it takes to converge. This is necessary for Theorem~\ref{thm:solecki1}, but it doesn't matter anywhere else in this paper---in other words, after the statement of Theorem~\ref{thm:solecki1}, the entire remainder of the paper can be read as if we had defined $J$ in the usual way instead of the definition given above.

\begin{theorem}[Solecki dichotomy]
\label{thm:solecki1}
For every Borel function $f \colon \omega^\omega \to \omega^\omega$, either $f$ is $\sigma$-continuous or the Turing jump topologically embeds into $f$.
\end{theorem}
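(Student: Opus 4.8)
I would not reprove this: it is the Solecki dichotomy, established by Solecki for functions of Baire class $1$ and extended by Pawlikowski and Sabok to all Borel functions \cite{pawlikowski2012decomposing}, and---as noted above---the version stated with Pawlikowski's function in place of $J$ is equivalent to the one given here. It is nonetheless worth recording the shape of the argument, since it explains why the Turing jump, rather than some honestly discontinuous function, is the canonical obstruction to $\sigma$-continuity.

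The core of the proof is to extract, from the failure of $\sigma$-continuity, a combinatorial ``scheme of oscillation'' along which $f$ literally carries out the computation defining $J$. Suppose $f$ is not $\sigma$-continuous. One aims to build a Cantor scheme $\sigma \mapsto U_\sigma$, indexed by $\sigma \in \omega^{<\omega}$, of basic clopen subsets of $\omega^\omega$---nested, with pairwise disjoint children and with diameters shrinking along branches---together with a matching scheme $\sigma \mapsto V_\sigma$ on the range side, so arranged that for each $x \in \omega^\omega$ the function $f$ carries the point determined by the branch $x$ of the first scheme to the point determined by the branch $J(x)$ of the second: on the subconditions where $J$ has already committed a coordinate to a nonzero value, $f$ is constantly that value there, whereas on the subconditions where $J$ is still withholding a coordinate, $f$ keeps outputting $0$ at that coordinate but has proper extensions on which it is forced off $0$---exactly mirroring $J$'s halting behaviour. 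Producing such a scheme is the whole difficulty. For Baire class $1$ functions Solecki does this by a rank analysis: one iterates an operation that strips off the part of the domain on which $f$ is locally well-behaved, and either the process terminates, yielding a countable partition into pieces of continuity, or it does not, and the non-terminating branches assemble into the scheme. For general Borel $f$ there is no reduction to the Baire class $1$ case---a $\sigma$-continuous function may have arbitrarily large Borel rank, since, for instance, any Borel function with countable range is $\sigma$-continuous---so Pawlikowski and Sabok instead induct through the finite levels of the Baire hierarchy, applying at each stage a dichotomy of Hurewicz type to an analytic set built from $f$: the ``$K_\sigma$'' alternative drives the recursion toward a decomposition, while the ``contains a closed copy of $\omega^\omega$'' alternative supplies a further layer of the scheme.

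Given the scheme, the two topological embeddings are produced by the standard Cantor-scheme construction: $\phi$ sends $x \in \omega^\omega$ to the point of $\omega^\omega$ determined by the branch $x$ of the $U$-scheme, and $\psi$ sends $y$ to the point determined by the branch $y$ of the $V$-scheme, with the indexing arranged so that $J \circ \phi = \psi \circ f$. That $\phi$ and $\psi$ are homeomorphisms onto their images is immediate from the nesting, disjointness and diameter conditions, and the diagram commutes by construction. This is also the one point at which the refined definition of $J$ above is actually used: recording the number of steps to halt, rather than a single bit, is what forces the extracted scheme to code $J$ itself on the nose---not merely some function into which the one-bit jump topologically embeds---which is what makes ``$J$ topologically embeds into $f$'' the exact complement of $\sigma$-continuity. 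The hard part throughout is, as flagged, the passage from the bare hypothesis of non-$\sigma$-continuity to the oscillation scheme; everything after that is bookkeeping.
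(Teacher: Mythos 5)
The paper itself does not prove Theorem~\ref{thm:solecki1}---it states it and cites the literature---so your decision not to reprove it, and instead to record the shape of the known argument, matches the paper exactly, and your sketch of the oscillation/Cantor-scheme mechanism and of why the step-counting refinement of $J$ matters is a reasonable gloss on those proofs. One attribution correction: the paper credits Zapletal~\cite{zapletal2004descriptive}, not Pawlikowski and Sabok, with the extension from Baire class~$1$ to all Borel functions; Pawlikowski and Sabok~\cite{pawlikowski2012decomposing} extended the result further, to all analytic functions (with Zapletal also noting the $\AD$ version).
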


Theorem~\ref{thm:solecki1} was first proved by Solecki in \cite{solecki1998decomposing} in the special case where $f$ is of Baire class $1$. It was extended to all Borel functions by Zapletal in \cite{zapletal2004descriptive} and to all analytic functions by Pawlikowski and Sabok in \cite{pawlikowski2012decomposing}. It is also known to hold for all functions under $\AD$ \cite{zapletal2004descriptive}.

We will now state two weaker versions of the Solecki dichotomy, obtained by replacing topological embeddability with weaker notions of reducibility between functions.

\begin{definition}
\label{def:reducible}
A function $f \colon \omega^\omega \to \omega^\omega$ is \term{reducible}\footnote{This notion of reducibility has also been called \term{strong continuous Weihrauch reducibility} \cite{brattka2011weihrauch} and \term{continuous reducibility} (by Carroy \cite{carroy2013quasi}).} to a function $g \colon \omega^\omega \to \omega^\omega$, written $f \leq g$, if there are partial continuous functions $\phi, \psi \colon \omega^\omega \to \omega^\omega$ such that for all $x \in \omega^\omega$, $f(x) = \psi(g(\phi(x)))$. In other words, the following diagram commutes
\begin{center}
\begin{tikzcd}
\omega^\omega \arrow[r, "g"]
& \omega^\omega \arrow[d, "\psi"]\\
\omega^\omega \arrow[r, "f"] \arrow[u, "\phi"]
& \omega^\omega
\end{tikzcd}
\end{center}
\end{definition}

Note that this definition implies that $\phi$ is actually total and that $\range(g\circ\phi) \subseteq \dom(\psi)$.

\begin{theorem}[Solecki dichotomy, version 2]
\label{thm:solecki2}
For every Borel function $f \colon \omega^\omega \to \omega^\omega$, either $f$ is $\sigma$-continuous or $J \leq f$.
\end{theorem}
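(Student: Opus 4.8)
The plan is to deduce Theorem~\ref{thm:solecki2} directly from Theorem~\ref{thm:solecki1}, exploiting the fact that reducibility is a strictly coarser relation than topological embeddability. The first step is the general observation that if $f$ topologically embeds into $g$, then $f \leq g$. Indeed, suppose $\phi, \psi \colon \omega^\omega \to \omega^\omega$ are topological embeddings with $\psi \circ f = g \circ \phi$. I would use $\phi$ itself as the first map of the reduction (a total continuous function is in particular a partial continuous one) and $\psi^{-1}\uh\range(\psi)$ as the second; since $\psi$ is a homeomorphism onto its image, this inverse is continuous with respect to the subspace topology on $\range(\psi)$, so it is a partial continuous function. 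For every $x$ we then have $\psi^{-1}(g(\phi(x))) = \psi^{-1}(\psi(f(x))) = f(x)$, and $\range(g\circ\phi) \subseteq \range(\psi) = \dom(\psi^{-1}\uh\range(\psi))$, so $f \leq g$.

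Next I would record two routine facts. First, $\leq$ is transitive: if $f \leq g$ via $(\phi_1, \psi_1)$ and $g \leq h$ via $(\phi_2, \psi_2)$, then $f \leq h$ via $(\phi_2 \circ \phi_1, \psi_1 \circ \psi_2)$, using that compositions of partial continuous functions are partial continuous and checking the range conditions from the definitions. Second, the Turing jump in the ordinary sense---write $J_0$ for the variant where $J_0(x)(n)$ merely records whether $\Phi^x_n(n)\downarrow$---satisfies $J_0 \leq J$, where $J$ is the step-counting function from the statement of Theorem~\ref{thm:solecki1}: take $\phi = \id$ and let $\psi$ be the total continuous map sending $y$ to the sequence $n \mapsto \min(y(n), 1)$, so that $\psi(J(x))(n) = 1$ precisely when $J(x)(n) > 0$, i.e.\ when $\Phi^x_n(n)\downarrow$, which is $J_0(x)(n)$.

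With these in hand the theorem is immediate. Let $f \colon \omega^\omega \to \omega^\omega$ be Borel. By Theorem~\ref{thm:solecki1}, either $f$ is $\sigma$-continuous, in which case we are done, or the step-counting jump $J$ topologically embeds into $f$; by the first observation this gives $J \leq f$, and then by transitivity together with $J_0 \leq J$ we obtain $J_0 \leq f$. Since from this point onward in the paper ``$J$'' denotes the ordinary jump $J_0$, this is exactly the claimed dichotomy.

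I do not anticipate any genuine obstacle: Theorem~\ref{thm:solecki2} is a weakening of Theorem~\ref{thm:solecki1} and its proof is essentially the unwinding of definitions above. The only point needing care is the bookkeeping between the two definitions of the Turing jump, which the easy reduction $J_0 \leq J$ disposes of; if one instead reads Theorem~\ref{thm:solecki2} with $J$ the step-counting jump, even that step is superfluous and the theorem is a one-line corollary of Theorem~\ref{thm:solecki1}.
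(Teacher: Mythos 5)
Your argument is correct and matches the paper's route exactly: the paper gives no separate proof of Theorem~\ref{thm:solecki2} but merely remarks (just after Definition~\ref{def:reducible}) that topological embeddability implies reducibility, so version~2 is a weakening of version~1. The one thing you add---the explicit reduction $J_0 \leq J$ reconciling the step-counting jump in Theorem~\ref{thm:solecki1} with the ordinary jump---is a reasonable precaution where the paper simply asserts that the distinction ``doesn't matter'' after Theorem~\ref{thm:solecki1}, but it does not change the substance.
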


Note that if $f$ topologically embeds into $g$ then $f$ is also reducible to $g$: if $\phi$ and $\psi$ are topological embeddings such that $\psi \circ f = g \circ \phi$ then by definition, $\psi^{-1}$ is a partial continuous function and $f = \psi^{-1}\circ g \circ \phi$. Hence version 2 of the Solecki dichotomy above really is a weakened version of the original Solecki dichotomy.

Before going further, let's try to understand this notion of reducibility a little better. Suppose that $f$ is reducible to $g$ via partial continuous functions $\phi$ and $\psi$---i.e.\ that $f = \psi\circ g\circ \phi$. Then the task of evaluating $f$ at a given point can be achieved by evaluating $g$ at a single point, together with some continuous pre- and post-processing using $\phi$ and $\psi$, respectively.

This way of understanding reducibility suggests another, slightly weaker, notion: instead of only allowing $\psi$ to use $g(\phi(x))$ in the post-processing step, why not allow it to use the original input, $x$, as well? Using this weakened notion of reducibility yields our final version of the Solecki dichotomy, which is the version we will use for the rest of the paper.

\begin{definition}
\label{def:weaklyreducible}
A function $f \colon \omega^\omega \to \omega^\omega$ is \term{weakly reducible}\footnote{Also known as \term{continuous Weihrauch reducible}.} to a function $g\colon \omega^\omega \to \omega^\omega$, written $f \leq_w g$, if there are partial continuous functions $\phi \colon \omega^\omega \to \omega^\omega$ and $\psi \colon \omega^\omega \times \omega^\omega \to \omega^\omega$ such that for all $x \in \omega^\omega$, $f(x) = \psi(g(\phi(x)), x)$.
\end{definition}

\begin{theorem}[Solecki dichotomy, version 3]
\label{thm:solecki3}
For every Borel function $f \colon \omega^\omega \to \omega^\omega$, either $f$ is $\sigma$-continuous or $J \leq_w f$.
\end{theorem}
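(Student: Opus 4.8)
The plan is to derive Theorem~\ref{thm:solecki3} directly from Theorem~\ref{thm:solecki2} (hence, ultimately, from the classical Solecki dichotomy, Theorem~\ref{thm:solecki1}). The only new ingredient needed is the observation that reducibility in the sense of Definition~\ref{def:reducible} implies weak reducibility in the sense of Definition~\ref{def:weaklyreducible}; once that is in hand, the theorem is immediate.

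So the first step is to check that $f \leq g$ implies $f \leq_w g$. Given partial continuous $\phi, \psi \colon \omega^\omega \to \omega^\omega$ with $f(x) = \psi(g(\phi(x)))$ for all $x$, I would simply set $\hat\psi(y, x) \defeq \psi(y)$, a partial continuous function on $\dom(\psi) \times \omega^\omega$ (it is the composite of the continuous projection onto the first coordinate with $\psi$). Then $f(x) = \hat\psi(g(\phi(x)), x)$ for every $x$, which witnesses $f \leq_w g$. The second step is then just to invoke Theorem~\ref{thm:solecki2}: for Borel $f$, either $f$ is $\sigma$-continuous, or $J \leq f$ and therefore $J \leq_w f$.

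I do not expect any real obstacle in this argument; all of the difficulty of Theorem~\ref{thm:solecki3} is already contained in the proofs of Theorems~\ref{thm:solecki1} and~\ref{thm:solecki2} available in the literature \cite{solecki1998decomposing, zapletal2004descriptive, pawlikowski2012decomposing}. The point of singling out version~3 is not that it is harder to establish but that its weaker conclusion is easier to apply: permitting the post-processing function to see the original input $x$ is precisely the feature that will let us line version~3 up with the Posner--Robinson theorem in the sections that follow. If one wanted a self-contained treatment one could instead reprove version~3 from scratch by adapting one of the known proofs of the full dichotomy, but there is no need to do so given that the stronger statements are already on the table.
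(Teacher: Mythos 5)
Your argument is correct: the implication $f \leq g \Rightarrow f \leq_w g$ is exactly as you describe (precompose the projection $\pi_1$ with $\psi$), and then Theorem~\ref{thm:solecki3} falls out of Theorem~\ref{thm:solecki2} immediately. This is indeed the ``standard'' way the paper intends the reader to see that version~3 is true when it is first stated in the introduction (just as the paper explicitly notes that topological embeddability implies reducibility, so version~2 follows from version~1), and there is no gap. However, the proof the paper actually \emph{writes out} for Theorem~\ref{thm:solecki3} is a different one: in Section~\ref{sec:prtosolecki} the paper derives Theorem~\ref{thm:solecki3} from the cone version of the Posner--Robinson theorem (Theorem~\ref{thm:pr3}) together with Borel determinacy, via a game $G(J,f)$ in which player~2 plays a code and two auxiliary reals and must correctly compute the jump of player~1's real. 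Lemma~\ref{lem:abovejump} shows that a winning strategy for player~2 yields $J \leq_w f$, and Lemma~\ref{lem:sigmacontinuous} uses Posner--Robinson to show a winning strategy for player~1 forces $f$ to be $\sigma$-continuous. The trade-off is clear: your route is shorter and rests entirely on the (deep) known results in the literature, while the paper's route is longer and uses a different (also deep) input, namely Posner--Robinson, and that is precisely its point---the paper is after the implication \emph{Posner--Robinson implies Solecki}, not after the most economical proof of Theorem~\ref{thm:solecki3}. It is also worth noting, as you do, that whether one uses version~2 or version~3 as the endpoint is not an accident: the game $G(f,g)$ naturally characterizes weak reducibility rather than reducibility (player~2's post-processing functional $\Phi_e$ gets to see $x$), which is why the paper targets version~3 and raises the existence of a game for reducibility as an open question.
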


\subsection*{The Posner-Robinson theorem}

Informally, the Posner-Robinson theorem says that every real $x$ is either computable or ``looks like'' $0'$ relative to some real $y$. Formally stated, it reads as follows.

\begin{theorem}[Posner-Robinson theorem]
For every real $x$, either $x$ is computable or there is some real $y$ such that $x \oplus y \geq_T y'$.
\end{theorem}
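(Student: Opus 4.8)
The plan is to prove the substantive direction: given a real $x$ that is not computable, I would construct a real $y$ with $x \oplus y \geq_T y'$ (with a little more care, $x \oplus y \equiv_T y'$). The real $y$ will be built by a forcing-style construction, as the union of an increasing sequence $\sigma_0 \subset \sigma_1 \subset \cdots$ of finite binary strings. It is convenient to aim for two separate things: first, that $y$ be ``generalized low'', i.e.\ $y' \leq_T y \oplus 0'$; and second, that the coding built into $y$ make $y \oplus 0' \equiv_T x \oplus y$ --- concretely, $0' \leq_T x \oplus y$ and $x \leq_T y \oplus 0'$. These combine to give $y' \leq_T y \oplus 0' \leq_T x \oplus y$, and in fact the same coding will yield the reverse inequality as well.

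The generalized-lowness of $y$ is essentially a genericity condition: it is enough that for each $e$ the construction, at the stage devoted to $e$, either drive the current string into some $\tau$ with $\Phi_e^\tau(e)\downarrow$ or commit to a string that forces $\Phi_e^y(e)\uparrow$; then whether $e \in y'$ can be decided from $y$ together with $0'$ by locating the relevant initial segment of $y$ and asking $0'$ whether it is a convergent or a divergent one. For the coding, I would single out --- using $x$ itself to pick which coordinates --- a sparse infinite set of ``coding positions'' in $y$, and write onto them a copy of the enumeration of $0'$ scrambled bitwise against $x$, so that $x \oplus y$ can unscramble it to recover $0'$ while $y$ alone and $x$ alone cannot; symmetrically this lets $y \oplus 0'$ recover $x$. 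That such a simultaneous arrangement is possible is exactly the ``$x$ joins $y$ up to $0'$'' phenomenon that the non-computability of $x$ underwrites.

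The hard part --- and the heart of the Posner--Robinson argument --- is reconciling these two demands, and in particular making the divergence verdicts ``$\Phi_e^y(e)\uparrow$'' recoverable along the way. Committing a computation to permanent divergence is inherently a $\Pi^0_1$ act: one must know that no admissible future extension of the current string forces convergence, and $x$ by itself cannot certify this, since $x$ need not compute $0'$. The way around this is to arrange the forcing so that whenever the construction wants to force divergence for the $e$-th requirement it simultaneously produces a finite certificate --- built from a bounded amount of $x$ together with the current string --- that the eventual decoder can check; the non-computability of $x$ is what makes such certificates always available and findable (if $x$ were computable, then $x \oplus y \equiv_T y$ and nothing could compute $y'$). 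In bookkeeping terms this means taking forcing conditions to be finite strings together with finitely much $x$-derived restraint, proving the expected density lemmas (the conditions handling the $e$-th genericity requirement, and those advancing the coding, are dense), passing to a sufficiently generic $y$, and finally checking that the two decoding procedures --- $y'$ from $y \oplus 0'$, and $0'$ (and $x$) from $x \oplus y$ --- are total and correct. I expect essentially all the difficulty to sit in designing the restraint, and keeping the coding sparse enough, that the divergence certificates are genuinely $x \oplus y$-recoverable; once the forcing is set up correctly, the density arguments and the verification of decoding are routine.
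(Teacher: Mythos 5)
The paper does not give a proof of this theorem; it is cited to Posner and Robinson's 1981 paper (with later extensions by Jockusch--Shore and Shore--Slaman), so there is no internal proof to compare yours against. Evaluating the proposal on its own: the high-level architecture you describe---build $y$ by finite extensions, make $y$ generalized low so $y' \leq_T y \oplus 0'$, and code so that $0' \leq_T x \oplus y$ using $x$ as the ``key''---is indeed the standard shape of the argument, and you correctly identify where the real content lives: the divergence outcomes of the jump-forcing steps are $\Pi^0_1$ facts, and $x \oplus y$, which at the outset has no evident access to $0'$, must nevertheless be able to reconstruct which outcome occurred at each stage.

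However, the proposal stops exactly at that point and waves at ``finite certificates built from a bounded amount of $x$'' without saying what they are or why they exist, and I don't think the design work you defer to ``designing the restraint, and keeping the coding sparse enough'' is routine---it is the entire theorem. As written, your plan has a genuine circularity: the coding positions are to be ``$x$-derived'' and the construction that lays them down is $(x \oplus 0')$-recursive, so to locate the coding bits $x \oplus y$ must reconstruct the construction, which requires answering the $\Sigma^0_1$ questions, which is exactly the $0'$-information you are trying to decode. The naive escapes fail on inspection: if you fix the coding positions in advance (say, even positions carry $0'(n) \oplus x(n)$), then the jump-forcing over the constrained conditions becomes $\Sigma^0_1(x \oplus 0')$ rather than $\Sigma^0_1$, pushing $y'$ up to $(x \oplus 0')'$ rather than $x \oplus 0'$; and if you interleave a single coding bit after each jump stage, $x \oplus y$ cannot distinguish the ``no convergent extension'' case from the ``found one'' case, since either way the next bit of $y$ equals $x(s)$. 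Breaking this circularity---arranging that the non-computability of $x$ forces the decoder's parallel search to resolve each stage in finite time, without ever $y$ alone being able to read off either $x$ or $0'$---is the specific combinatorial content of the Posner--Robinson argument, and it needs to be exhibited, not postulated. Until the forcing notion, the stage-by-stage move, and the decoding procedure are written out together and checked to terminate, the proposal is an accurate description of what a proof must accomplish rather than a proof.
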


This theorem was first proved by Posner and Robinson in~\cite{posner1981degrees} and extended by Jockusch and Shore~\cite{jockusch1985rea} and Shore and Slaman~\cite{shore1999defining}. As usual, there is also a relativized version of this theorem.

\begin{theorem}[Posner-Robinson theorem, relativized version]
For every real $z$ and every real $x$, either $x \leq_T z$ or there is some real $y$ such that $x \oplus y \oplus z \geq_T (y \oplus z)'$.
\end{theorem}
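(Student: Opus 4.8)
The plan is to deduce the relativized statement from the (unrelativized) Posner--Robinson theorem by relativizing its proof to the oracle $z$. Using the unrelativized theorem as a black box does not seem to help directly: the jump $(y \oplus z)'$ appearing in the conclusion is sensitive to $z$ in a way that cannot be recovered from a $z$-free instance of the theorem. So I would instead recall the shape of the usual proof. Given a noncomputable real $x$, one builds a witness $y$ by a finite-extension (forcing) argument whose conditions are finite strings and which arranges that a single Turing functional computes $y'$ from $x \oplus y$. The requirements have the form ``decide $\Phi^y_e(e)$'': at the appropriate stage one extends the current condition $\sigma$ so as either to force $\Phi^y_e(e)\downarrow$ or to force $\Phi^y_e(e)\uparrow$. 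The crux is that the $\Sigma^0_1$ question of whether $\sigma$ even has an extension forcing convergence---which a priori needs $\emptyset'$ to answer---is instead resolved inside the construction using the noncomputability of $x$, in such a way that the record of decisions made by the construction is, together with $y$, computable from $x \oplus y$; this yields $y' \leq_T x \oplus y$.

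Second, I would observe that this construction is uniform in its ground oracle: the enumeration of the functionals $\Phi_e$ is untouched, but every ``recursive in $y$'' bookkeeping step becomes ``recursive in $y \oplus z$'', every occurrence of $\emptyset'$ becomes $z'$, and the one hypothesis that is actually used---that $x$ is not recursive---becomes the hypothesis that $x$ is not recursive in $z$. Writing $Y = y \oplus z$ for the object built over $z$ (so $Y \geq_T z$ automatically), running the relativized construction on any $x$ with $x \not\leq_T z$ produces $y$ together with a Turing functional witnessing $(y \oplus z)' = Y' \leq_T x \oplus Y = x \oplus y \oplus z$, which is exactly the desired conclusion; and if instead $x \leq_T z$ we are in the first alternative. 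So the whole argument is: split on whether $x \leq_T z$, and in the negative case run the relativized finite-extension construction.

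The only thing that genuinely requires checking---and the step I expect to be the main, though still routine, obstacle---is that the part of the original argument exploiting the noncomputability of $x$ survives the weakening of ``noncomputable'' to ``not recursive in $z$''. Concretely, one has to confirm that wherever the unrelativized construction diagonalizes against (or races against) a computable object, the relativized version does so against the corresponding $z$-computable object, and that no appeal to $\emptyset'$ has slipped in beyond what is legitimately replaced by $z'$. Since the original proof is an explicitly relativizable forcing construction, this amounts to a mechanical, clause-by-clause check, which is how I would finish.
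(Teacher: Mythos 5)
The paper does not actually prove this statement. It records the Posner--Robinson theorem with citations to Posner--Robinson, Jockusch--Shore, and Shore--Slaman, and then simply remarks ``As usual, there is also a relativized version of this theorem,'' leaving the relativization as an exercise to the reader familiar with the original argument. So there is no proof in the paper against which to compare yours; what you are being asked to supply is precisely the content the paper elides.

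Your proposal is the right one and is exactly what ``as usual'' gestures at: split on whether $x \leq_T z$, and in the negative case rerun the original finite-extension construction with everything computed relative to $z$, replacing ``noncomputable'' by ``not recursive in $z$,'' ``$\emptyset'$'' by ``$z'$,'' and building the generic on top of $z$ so that the witness is $y$ with $x \oplus y \oplus z \geq_T (y \oplus z)'$. You correctly observe that the unrelativized theorem cannot be used as a black box here, since the target $(y \oplus z)'$ depends on $z$. The one place your write-up is soft is in your description of how the original argument dodges the $0'$ oracle: the Posner--Robinson construction does not literally ``decide'' whether an extension forcing $\Phi^y_e(e)\downarrow$ exists, but rather plays a race in which the noncomputability of $x$ guarantees that one of two computable guessing processes must err, and the first point of error both tells you which branch to take and is recoverable from $x \oplus y$. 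That said, you flag precisely this as the step requiring a clause-by-clause check, and since every ingredient in that race (the bounded searches, the coding of decisions, the recovery procedure) is manifestly relative-to-an-oracle, the check does go through. This is a correct and standard relativization argument, in line with what the paper implicitly assumes.
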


We can weaken this relativized version of the Posner-Robinson theorem by requiring the conclusion to hold not for every $z$, but only for all $z$ in some cone of Turing degrees.

\begin{definition}
A \term{cone of Turing degrees} is a set of the form $\{x \in \omega^\omega \mid x \geq_T y\}$ for some fixed real $y$, called the \term{base} of the cone.
\end{definition}

\begin{theorem}[Posner-Robinson theorem, cone version]
\label{thm:pr3}
There is a cone of Turing degrees, $C$, such that for every $z \in C$ and every real $x$, either $x \leq_T z$ or there is some real $y$ such that $x\oplus y\oplus z \geq_T (y\oplus z)'$.
\end{theorem}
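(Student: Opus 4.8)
The quickest route is to observe that Theorem~\ref{thm:pr3} is a formal weakening of the relativized Posner--Robinson theorem stated immediately above it: that theorem already supplies the bracketed conclusion for \emph{every} real $z$, hence a fortiori for every $z$ in the cone $C = \{x \in \omega^\omega \mid x \geq_T \emptyset\} = \omega^\omega$. So, granting the classical relativized Posner--Robinson theorem, there is literally nothing to prove, and one may take $C$ to be everything. The only reason to isolate this vacuous-looking weakening is that it is precisely the form of the Posner--Robinson theorem that will be shown to be almost equivalent to version~3 of the Solecki dichotomy; the mathematical content is in that equivalence, not in Theorem~\ref{thm:pr3} itself.

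The substantive task, and the one I would actually work on, is a proof of Theorem~\ref{thm:pr3} that does \emph{not} cite Posner--Robinson but derives it from Theorem~\ref{thm:solecki3} together with $\mathbf{\Pi}^1_1$-determinacy. I would argue by contradiction: assume that for every real $b$ there is some $z \geq_T b$ and some $x$ with $x \not\leq_T z$ and $x \oplus y \oplus z \not\geq_T (y \oplus z)'$ for all $y$. A Martin-style cone argument---this is where $\mathbf{\Pi}^1_1$-determinacy enters, and where one must be careful about the logical complexity of the set of ``good'' $z$ so that the determinacy hypothesis actually needed is only $\mathbf{\Pi}^1_1$---should let me fix one such pair and, relativizing everything to $z$, reduce to a clean situation: a non-computable $x$ with $(y \oplus x)' \not\leq_T x \oplus y$ for every $y$. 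From this $x$ I would manufacture a Borel function $f\colon\omega^\omega\to\omega^\omega$ built out of the Turing jump and $x$, check that it is not $\sigma$-continuous (a Baire-category-plus-join computation: a partition $\{A_n\}$ witnessing $\sigma$-continuity via oracles $p_n$ would force $\bigoplus_n p_n$, which lands in some $A_m$, to compute a jump it cannot), apply Theorem~\ref{thm:solecki3} to obtain partial continuous $\phi,\psi$ with $J(w) = \psi(f(\phi(w)),w)$ for all $w$, and then evaluate this identity at a sufficiently generic $w$ (so that $J(w)\equiv_T w'\equiv_T w\oplus\emptyset'$) to collapse the reduction to an inequality $(y\oplus x)'\leq_T x\oplus y$ for a $y$ assembled from $w$, $\phi(w)$, and the oracle computing $\phi,\psi,x$---contradicting the choice of $x$.

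The main obstacle is the design of $f$. It has to be complicated enough to be provably non-$\sigma$-continuous, which pushes it toward encoding full jumps, yet weak enough that unwinding $J\leq_w f$ transmits genuine information rather than a tautology: if $f(\phi(w))$ is itself a jump of something that already computes $w$, then $w'\leq_T p\oplus f(\phi(w))\oplus w$ holds automatically and says nothing, so the obvious guess $f = \bigl(w\mapsto (w\oplus x)'\bigr)$ is useless. Pinning $f$ to exactly the right level---morally ``the jump relative to $x$, but only on the inputs on which this is new information''---is the crux. By comparison, verifying Turing-invariance and the pointclass bound needed for the cone argument, and the genericity bookkeeping in the final step, should be routine.
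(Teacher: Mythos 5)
Your first observation is exactly the paper's position: Theorem~\ref{thm:pr3} is introduced as a deliberate weakening of the relativized Posner--Robinson theorem (``We can weaken this relativized version \ldots\ by requiring the conclusion to hold not for every $z$, but only for all $z$ in some cone''), and the paper offers no separate proof because taking $C = \omega^\omega$ trivially suffices. So as a proof of the statement as written, you are correct and in agreement with the paper.

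Your sketch of the converse direction---deriving Theorem~\ref{thm:pr3} from Theorem~\ref{thm:solecki3} plus $\mathbf{\Pi}^1_1$-determinacy---is aimed at the right target (this is the content of Section~\ref{sec:soleckitopr}), but the route you gesture at diverges from the paper's and, as you yourself note, stalls at the key step. You propose fixing a single bad pair $(z,x)$, relativizing, and then ``manufacturing'' a Borel $f$ out of $x$ and the jump; you correctly identify that the obvious candidate $w\mapsto(w\oplus x)'$ is useless, and you leave the design of $f$ open. The paper resolves precisely this crux by \emph{uniformizing} rather than fixing a single point: it shows via a Friedberg-style jump inversion (Lemma~\ref{lem:lowfailure}) that the set of $x$ admitting a \emph{low} witness $y\leq_T x'$ to the failure of Posner--Robinson is cofinal, decomposes that set as $\bigcup_e C_e$ according to which functional $\Phi_e$ computes the witness from $x'$, and then applies the pointed-perfect-tree theorem (this is where $\mathbf{\Pi}^1_1$-determinacy is actually used, since each $C_e$ is $\mathbf{\Pi}^1_1$) to find one $e$ and a pointed perfect tree $T$ with $[T]\subseteq C_e$. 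Setting $f(x)=\Phi_e(x')$ on $[T]$ and $f(x)=0$ elsewhere yields a Baire class~1 counterexample to the Solecki dichotomy: it is not $\sigma$-continuous because $f(x)\nleq_T x$ on the cofinal set $[T]$ (Lemma~\ref{lem:counterexample1}, an elementary argument via Observation~\ref{obs:sigmacontinuous}, not the Baire-category-plus-join argument you suggest), and $J\nleq_w f$ because $f(x)$ is by construction a Posner--Robinson failure witness, so no jump information can be pulled out of it (Lemma~\ref{lem:counterexample2}). The moral difference is that the paper never needs to guess a clever $f$ from one bad $x$; the values of $f$ \emph{are} the failure witnesses themselves, and the determinacy-driven uniformization is what makes them hang together into a Borel function.
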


\subsection*{Determinacy principles}

As we mentioned earlier, determinacy principles are the main technical tool in our proofs. In this section, we will state the main theorems about determinacy that we need, as well as a useful corollary of these theorems.

Recall that determinacy principles involve games of the following form: two players, called player 1 and player 2, alternate playing natural numbers. Each player can see all previous moves by the other player. At the end, they have jointly formed a sequence $x \in \omega^\omega$. To determine the winner, we have some fixed set $A \subseteq \omega^\omega$, called the \term{payoff set}. Player 1 wins if $x \in A$ and otherwise player 2 wins. The game with payoff set $A$ is sometimes denoted $G(A)$.

In principle, it is possible that for a fixed payoff set $A$, neither player has a winning strategy. When one of the two players does have a winning strategy, the game $G(A)$ is said to be \term{determined}. Determinacy principles assert that when $A$ is sufficiently definable, $G(A)$ must be determined. For example, Martin proved that whenever $A$ is Borel, $G(A)$ is determined \cite{martin1985purely}.

\begin{theorem}[Borel determinacy]
For every Borel set $A \subseteq \omega^\omega$, $G(A)$ is determined.
\end{theorem}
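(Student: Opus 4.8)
The plan is to follow Martin's purely inductive proof, reducing the determinacy of an arbitrary Borel game to that of a \emph{clopen} game, which is handled by the classical Gale--Stewart theorem (open games, and hence clopen games, are determined by an easy backward-induction argument). Since the reduction will force us to change the moves available to the players, I would first set things up for games played on an arbitrary tree $T$ (on a possibly much larger alphabet than $\omega$), with an arbitrary payoff set $A$ among the branches $[T]$. The central device is the notion of a \emph{covering} of $T$: an auxiliary tree $\tilde T$ together with a continuous, length-preserving projection $\pi\colon[\tilde T]\to[T]$ and maps lifting strategies for either player in $\tilde T$ to strategies for that player in $T$, in such a way that every play consistent with a lifted strategy is the $\pi$-image of a play consistent with the original strategy. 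Say a covering \emph{unravels} $A$ if $\pi^{-1}(A)$ is clopen in $[\tilde T]$. The first, easy observation is that if some covering unravels $A$, then $G(A)$ is determined: by Gale--Stewart the clopen game on $\tilde T$ with payoff $\pi^{-1}(A)$ is won by one of the players, and pushing that winning strategy forward along the lifting maps yields a winning strategy in $G(A)$ (any play consistent with the pushed-forward strategy projects to a play consistent with the original, which therefore lands on the correct side of $\pi^{-1}(A)$, hence of $A$).

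So everything reduces to showing that every Borel payoff set is unravelled by some covering. The base case --- every closed (equivalently, by self-duality of ``clopen'', every open) subset of $[T]$ is unravelled by a covering --- is the technical heart of the argument, and it is where I expect essentially all the difficulty to lie. Martin's construction enlarges the players' first move so that each player is in effect forced to announce a quasi-strategy witnessing their claim about whether the eventual play will land in the closed set; verifying that this really constitutes a covering, and in particular that the strategy-lifting maps interact correctly with these announcements, is the delicate part. One subtlety worth flagging is that the auxiliary alphabet here is roughly the power set of $T$, so iterating the construction makes the alphabets grow rapidly --- this is the source of the well-known fact that Borel determinacy is not provable from weak fragments of $\ZFC$.

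To climb the Borel hierarchy I would establish two closure properties. First, coverings compose. Second --- and this is what makes infinite iteration possible --- one works with \emph{$k$-coverings}, in which $\pi$ leaves the first $k$ moves unchanged and the lifting maps are suitably local below level $k$; a sequence of $k_n$-coverings with $k_n\to\infty$ then admits an inverse-limit covering, and the closed-unravelling lemma can be arranged to produce a $k$-covering for any prescribed $k$. Given these tools the proof concludes by transfinite induction on Borel rank: a $\mathbf{\Sigma}^0_{\xi+1}$ set is a countable union $\bigcup_n A_n$ of sets of lower rank; using the inductive hypothesis, unravel $A_0$ by a $0$-covering, pull $A_1$ back (continuous preimages do not raise Borel rank) and unravel it by a $1$-covering, and so on, then pass to the inverse limit, obtaining a covering under which $A$ pulls back to an \emph{open} set; one further application of the base case makes it clopen, and composing the two coverings finishes the successor step. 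Limit stages are identical. Hence every Borel set is unravelled by some covering, and by the first paragraph every Borel game is determined.
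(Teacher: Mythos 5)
Your sketch accurately outlines Martin's purely inductive proof of Borel determinacy via coverings and unravelings, which is the canonical argument and precisely the one the paper cites. Note, however, that the paper does not itself prove this theorem: Borel determinacy appears only as a cited background result (\cite{martin1985purely}), invoked as a tool in the paper's main arguments, so there is no internal proof to compare your sketch against. As a freestanding outline your proposal is sound: the covering machinery with its strategy-transfer property, the reduction to Gale--Stewart via clopen pullbacks, the role of $k$-coverings and inverse limits in stabilizing the iteration, and the transfinite induction on Borel rank are all correctly placed and in the right order. Of course, as you yourself flag, essentially all of the substantive work lies in the closed-set unraveling lemma, which you leave at the level of a description rather than a proof; that is where the quasi-strategy announcements, the alphabet blow-up, and the verification of the covering axioms actually have to be carried out.
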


Determinacy principles for sets which are not Borel are typically not provable in $\ZFC$. However, they are usually provable from large cardinal principles. For example, Martin proved that if there is a measurable cardinal then all games with $\mathbf{\Pi}^1_1$ payoff sets are determined \cite{martin2003measurable}.

\begin{theorem}[Analytic determinacy]
Assume that there is a measurable cardinal. Then for every $\mathbf{\Pi}^1_1$ set $A \subseteq \omega^\omega$, $G(A)$ is determined.
\end{theorem}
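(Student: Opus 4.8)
The statement is Martin's theorem that a measurable cardinal yields $\mathbf{\Pi}^1_1$-determinacy, so the plan is to carry out Martin's ``unraveling'' argument \cite{martin2003measurable}. First I would invoke the normal form for $\mathbf{\Pi}^1_1$ sets and fix a tree $T$ on $\omega\times\omega$, recursive in a real parameter, such that $A = \{x\in\omega^\omega : T_x \text{ is well-founded}\}$, where $T_x \defeq \{s\in\omega^{<\omega} : (x\uh|s|,\, s)\in T\}$ is the section tree. The value of this representation is that well-foundedness of $T_x$ is witnessed by a rank function, which, since $T_x$ is countable, may be taken to have values below $\omega_1$ and hence below the measurable cardinal $\kappa$. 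It matters that we use the $\mathbf{\Pi}^1_1$ form rather than the dual analytic form $A = p[S]$: in the game under study it will be player 1, the player aiming for $x\in A$, who supplies ordinal witnesses below $\kappa$, and a measure on $\kappa$ is precisely what we will have available to average over them. (That determinacy of $\mathbf{\Pi}^1_1$ games entails determinacy of analytic games as well---whence the name---is a standard, purely combinatorial reduction.)

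Next I would introduce the auxiliary game $G^*$. The two players build $x\in\omega^\omega$ by alternating exactly as in $G(A)$, but player 1 additionally plays, alongside each of their integer moves, an ordinal below $\kappa$; as $x$ is revealed, these ordinals are required to define, level by level, a rank function witnessing that $T_x$ is well-founded---concretely, an order-preserving map from the Kleene--Brouwer linearization of $T_x$ (the standard linear order on the nodes of a tree whose being a well-order is equivalent to well-foundedness of the tree) into $\kappa$. Player 1 wins $G^*$ exactly when this ranking requirement is never violated. Since a violation, if it ever occurs, is already visible at a finite stage, the payoff set for player 1 is closed (equivalently, open for player 2), so $G^*$ is determined by the Gale--Stewart theorem; this uses only that the move set, a subset of $\omega\times\kappa$, is well-orderable, so no large-cardinal hypothesis is needed here.

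I would then split on which player wins $G^*$. If player 1 has a winning strategy $\sigma^*$, then the strategy obtained from $\sigma^*$ by suppressing the ordinal moves is already winning for player 1 in $G(A)$: along any play of $G(A)$ consistent with it, player 1 may privately also make the ordinal moves dictated by $\sigma^*$, obtaining a play of $G^*$ consistent with $\sigma^*$, which player 1 wins; so the ranking is maintained forever, $T_x$ is well-founded, and $x\in A$. No measurable cardinal is used in this case; all the work is in the other. Suppose $\tau^*$ is a winning strategy for player 2 in $G^*$, and fix a normal (hence $\kappa$-complete, nonprincipal) ultrafilter $U$ on $\kappa$. I would use $U$ to ``integrate out'' player 1's ordinal moves: at any finite stage those moves form a finite tuple of ordinals below $\kappa$, and since a normal $\kappa$-complete ultrafilter is homogeneous for colorings of such tuples by countably many colors, the response prescribed by $\tau^*$ is constant as the tuple ranges over the increasing tuples drawn from some set in $U$. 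Let player 2's strategy $\tau$ in $G(A)$ play that common value at each turn, recording the associated $U$-measure-one sets along the way.

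Finally I would verify that $\tau$ wins. Suppose for contradiction that some $\tau$-play produced $x\in A$; then $T_x$ is well-founded, so its Kleene--Brouwer linearization carries a rank function into $\kappa$. Intersecting the countably many recorded measure-one sets---legitimate since $U$ is $\kappa$-complete---I would select a single increasing sequence of ordinal moves for player 1 that is simultaneously (i) consistent with $\tau^*$, so the resulting run of $G^*$ is a genuine $\tau^*$-play, and (ii) consistent with the chosen rank function, so the ranking requirement is never violated. But then this is a $\tau^*$-play won by player 1, contradicting that $\tau^*$ was winning for player 2. The main obstacle is exactly this last step, together with the averaging that precedes it: one has to arrange the bookkeeping---in what order player 1's ordinals are demanded, how they are matched with nodes of $T_x$, and how the finite-exponent homogeneity of $U$ is invoked---so that ``lies on a run consistent with $\tau^*$'' and ``codes a legitimate rank function for $T_x$'' can be met by one and the same sequence of ordinals. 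Reconciling the monotonicity that the partition property forces on these ordinals with the behavior demanded of a rank function is the delicate point, and it is precisely what having a normal measure on $\kappa$---as opposed to some weaker hypothesis---is there to accomplish.
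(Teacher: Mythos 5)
The paper does not give a proof here; this theorem is stated as background and attributed directly to Martin \cite{martin2003measurable}, so there is no in-paper argument to compare against. Your sketch faithfully reproduces the structure of Martin's proof: the $\mathbf{\Pi}^1_1$ normal form via a tree $T$ on $\omega\times\omega$ and well-foundedness of sections, the auxiliary game $G^*$ in which player 1 additionally plays ordinals below $\kappa$ encoding an order-preserving map from the Kleene--Brouwer linearization of $T_x$, determinacy of $G^*$ by Gale--Stewart on a well-orderable move set, the easy transfer of a player-1 win, and the use of a normal measure on $\kappa$ to average out the ordinal moves when player 2 wins $G^*$.

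You are candid that the bookkeeping in the final step is left open, and this is the only real gap. To close it: color each finite tuple of ordinals not merely by the set of its entries but by its \emph{order type} (the pattern of equalities and inequalities among the entries), together with the integer move $\tau^*$ would produce in response; then use the Rowbottom partition property of the normal measure, plus $\kappa$-completeness, to obtain a single $H\in U$ homogeneous for all of these countably many colorings at once. The key observation reconciling monotonicity with rank functions is that the order type a legal player-1 ordinal tuple must exhibit at stage $n$ is already determined by the integer moves played so far, since those fix which nodes $s_m$, $m<n$, lie in $T_x$ and their Kleene--Brouwer comparisons; so player 2 in $G(A)$ can always look up which homogeneous value to play without knowing the ordinals. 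If some $\tau$-play then yielded $x\in A$, one would choose a rank function for $T_x$ with range inside $H$ (possible since $|H|=\kappa>\omega_1$) and realize it as a tuple from $H$ of the forced order type at every stage, producing a $\tau^*$-play of $G^*$ won by player 1---contradicting that $\tau^*$ was winning for player 2. With this order-type bookkeeping spelled out, your outline becomes a complete proof, and the normality hypothesis is used exactly where you say it is, to secure the Rowbottom property.
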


There is also an axiom, known as the \term{Axiom of Determinacy} and abbreviated $\AD$, which states that for \emph{all} sets $A\subseteq \omega^\omega$, $G(A)$ is determined. This axiom is incompatible with the axiom of choice, but it is consistent with $\ZF$ \cite{koellner2010large}.

\begin{theorem}
Assuming that $\ZF + \text{``there are infinitely many Woodin cardinals''}$ is consistent, so is $\ZF + \AD$.
\end{theorem}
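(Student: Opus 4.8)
The plan is to obtain this as a consistency consequence of the Martin--Steel--Woodin theory of determinacy from Woodin cardinals, the shortest route being via Woodin's Derived Model Theorem. First I would start with a model $V$ of $\ZFC$ containing infinitely many Woodin cardinals $\delta_0 < \delta_1 < \cdots$ and put $\delta \defeq \sup_n \delta_n$, so that $\delta$ is a limit of Woodin cardinals. Let $G$ be $V$-generic for the L\'evy collapse $\mathrm{Coll}(\omega,{<}\delta)$. The Derived Model Theorem says that the derived model $D(V,\delta)$ --- built inside $V[G]$ from the ``symmetric reals'' $\R^* \defeq \bigcup_{\alpha < \delta}\R^{V[G\uh\alpha]}$ --- satisfies $\AD^+$, hence $\AD$; and $D(V,\delta)$ is transitive and satisfies $\ZF$, so it is a model of $\ZF + \AD$. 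By the completeness theorem, the existence of such a model yields $\mathrm{Con}(\ZF+\AD)$ from $\mathrm{Con}(\ZFC + \text{``infinitely many Woodins''})$. The one loose end is the gap between the $\ZF$ hypothesis in the statement and the $\ZFC$ background needed to run this machinery; reconciling the two --- for instance by first passing to a choiceful inner model retaining the Woodin cardinals --- is a technical point addressed in the literature \cite{koellner2010large} and is not where the mathematical content lies.

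Behind the Derived Model Theorem sit two genuinely deep ingredients. The first is the propagation of tree (Suslin) representations of sets of reals through a Woodin cardinal: if $\delta$ is Woodin, the complement of a homogeneously Suslin set is weakly homogeneously Suslin, and a further Woodin cardinal below converts a weakly homogeneously Suslin set back into a homogeneously Suslin one; iterating this with the infinitely many Woodins at hand makes every projective set Suslin and co-Suslin, which already yields Projective Determinacy. The technical engine is the theory of iteration trees together with the theorem that the relevant alternating iteration trees have cofinal wellfounded branches. The second ingredient is Woodin's analysis showing that this propagation can be pushed up to the limit $\delta$ of the Woodins, so that every set of reals of the derived model is Suslin--co-Suslin and hence determined --- i.e.\ that the derived model satisfies $\AD^+$; with an additional measurable cardinal above all the Woodins one gets the sharper conclusion that $L(\R)$ itself satisfies $\AD$. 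This step relies on genericity iterations and stationary-tower techniques to capture arbitrary sets of reals of the relevant inner model by more absolutely definable ones.

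The main obstacle is that essentially all of the difficulty is concentrated in this second ingredient: bridging from ``Suslin--co-Suslin sets are determined'' plus the Woodin cardinals to full determinacy in the derived model. A self-contained proof must develop iteration trees, the comparison and wellfoundedness theory for mice, stationary-tower forcing, and the fine-structural analysis of $L(\R)$, which together are the content of monograph-length treatments (see \cite{koellner2010large} and the references therein), so a realistic account of this theorem amounts to invoking that body of work. Everything else --- extracting the transitive model $D(V,\delta) \models \ZF + \AD$ from a model of the large-cardinal hypothesis and reading off the relative consistency via completeness --- is soft once the Derived Model Theorem is granted.
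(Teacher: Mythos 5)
The paper does not actually prove this statement: it appears in the ``Determinacy principles'' subsection as a background fact, cited to Koellner's survey \cite{koellner2010large}, and the paper's content nowhere depends on the proof. So there is no proof in the paper to compare against, and the honest answer is that your ``proof'' and the paper's ``proof'' are both, in effect, citations to the Woodin/Martin--Steel determinacy literature.

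That said, your sketch is a faithful account of the standard route. The Derived Model Theorem is indeed the canonical way to get $\mathrm{Con}(\ZF + \AD)$ from infinitely many Woodin cardinals: collapse below a limit $\delta$ of Woodins, form the symmetric reals $\R^*$, and conclude $L(\R^*) \models \AD$ (Woodin; see Steel's Handbook chapter on the Derived Model Theorem). You correctly identify the two deep inputs --- propagation of homogeneously/weakly homogeneously Suslin representations past Woodin cardinals via iteration trees (Martin--Steel), and Woodin's stationary-tower/genericity-iteration analysis that pushes this to the limit --- and you correctly observe that the remainder (completeness theorem, reading off relative consistency) is soft. The one place you are appropriately cautious is the $\ZF$-versus-$\ZFC$ mismatch in the hypothesis; in $\ZF$ alone the notion of ``Woodin cardinal'' and the extender machinery need care, and this is usually handled by working in a choiceful inner model or by stating the result as an equiconsistency with the $\ZFC$ hypothesis, which is how it typically appears in the literature. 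Flagging this as a technical point to be outsourced is reasonable, since the paper itself makes no attempt to resolve it either. In short: correct at the level of detail the paper operates at, and correctly self-aware about what is being invoked rather than proved.
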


Martin has also proved a corollary of determinacy which is often useful in computability theory and which we will use below. To state it we need a few more definitions.

\begin{definition}
A set $A \subseteq \omega^\omega$ is \term{cofinal in the Turing degrees} (or sometimes just \term{cofinal}) if for every $x \in \omega^\omega$ there is some $y \in A$ such that $y \geq_T x$.
\end{definition}

Note that if a set $A \subseteq \omega^\omega$ does not contain a cone of Turing degrees then its complement must be cofinal (and vice-versa).

\begin{definition}
A \term{pointed perfect tree} is a tree $T$ on $\omega$ such that
\begin{enumerate}
\item $T$ has no dead ends---i.e.\ every node has at least one child.
\item $[T]$ has no isolated paths---i.e.\ every node has incomparable descendants.
\item Every path $x \in [T]$ computes $T$.
\end{enumerate}
\end{definition}

It is not too hard to show that if $T$ is a pointed perfect tree then $[T]$ is cofinal in the Turing degrees. Martin showed that determinacy implies a partial converse of this: if $A$ is cofinal then there is some pointed perfect tree $T$ such that $[T] \subseteq A$~\cite{martin1968axiom, slaman1988definable}. Moreover, the amount of determinacy required to prove this matches the complexity of $A$.

\begin{theorem}
Suppose $A \subseteq \omega^\omega$ is cofinal in the Turing degrees. Then:
\begin{itemize}
\item If $A$ is Borel, $A$ contains $[T]$ for some pointed perfect tree $T$.
\item If $\mathbf{\Pi}^1_1$-determinacy holds and $A$ is $\mathbf{\Pi}^1_1$ then $A$ contains $[T]$ for some pointed perfect tree $T$.
\item If $\AD$ holds and $A$ is any set then $A$ contains $[T]$ for some pointed perfect tree $T$.
\end{itemize}
\end{theorem}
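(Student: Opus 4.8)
The plan is to prove all three statements at once via a single game, the three cases differing only in which determinacy principle is invoked to conclude that the game is determined. So fix $A$ cofinal in the Turing degrees and consider the game $G$ in which players 1 and 2 alternately play natural numbers, player 1 moving first; writing $w$ for the real assembled from player 1's moves and $z$ for the one assembled from player 2's moves, player 1 wins iff $w \in A$ and $w \geq_T z$. First I would show that player 2 has no winning strategy. If $\tau$ were one, then for every real $y \geq_T \tau$, letting player 1 simply play out the digits of $y$ against $\tau$ yields a play whose player-1 real is $y$ and whose player-2 real $z$ satisfies $z \leq_T y \oplus \tau \equiv_T y$; since $\tau$ is winning it cannot be that $y \in A$ and $y \geq_T z$, so $y \notin A$, and hence the cone above $\tau$ is disjoint from $A$, contradicting cofinality. (This is the only use of cofinality.) Next I would note that de-interleaving a play into the pair $(w,z)$ is a homeomorphism onto $\omega^\omega \times \omega^\omega$, and that the relation $w \geq_T z$ is $\Sigma^0_3$, hence Borel; so the payoff set of $G$ lies in the same pointclass as $A$ among Borel, $\mathbf{\Pi}^1_1$, and arbitrary sets, and therefore $G$ is determined — by Borel determinacy (in $\ZFC$) when $A$ is Borel, by $\mathbf{\Pi}^1_1$-determinacy when $A$ is $\mathbf{\Pi}^1_1$, and by $\AD$ in general. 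Since player 2 cannot win, player 1 has a winning strategy $\sigma$, which gives a continuous map $W \colon \omega^\omega \to \omega^\omega$ with $W \leq_T \sigma$ and with $W(z) \in A$ and $W(z) \geq_T z$ for every $z$.

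The substantive step is to extract from $W$ an honest pointed perfect tree. A naive attempt fails on two counts: the range of $W$ need not be closed, so $\{W(z)\uh n : z \in \omega^\omega, \, n \in \omega\}$ could have branches outside $A$; and there is no reason an individual value $W(z)$ should compute $\sigma$. I would remedy both by shrinking the domain. Let $\hat\sigma \in 2^\omega$ code $\sigma$ and set $C \defeq \{z \in 2^\omega : z(2n+1) = \hat\sigma(n)\text{ for all }n\}$; this is a compact perfect set, computable from $\sigma$, every member of which uniformly computes $\sigma$. Then $W[C]$ is compact, hence closed, and contained in $A$. It is also perfect: $W$ is non-constant on every nonempty relatively clopen subset of $C$, since such a subset is homeomorphic to $2^\omega$ and so contains reals of arbitrarily high Turing degree, which no single real can bound — and a continuous image of $2^\omega$ that is non-constant on every such subset has no isolated points. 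Let $T$ be the tree of initial segments of members of $W[C]$; then $T$ is a perfect tree, $[T] = W[C] \subseteq A$, and, since $W$ and $C$ are computable from $\sigma$ and $C$ is compact, $T \leq_T \sigma$. Finally $T$ is pointed: each $x \in [T]$ equals $W(z)$ for some $z \in C$, so $x \geq_T z \geq_T \sigma \geq_T T$. This $T$ is the desired pointed perfect tree with $[T] \subseteq A$.

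The main obstacle is precisely the middle step: producing a perfect tree whose body genuinely lies inside $A$ — not merely in its closure — and all of whose branches compute the tree. The device that overcomes it is to combine two ideas: restricting the game's input coordinate to the compact space $2^\omega$, so the relevant continuous image is automatically closed, and coding $\sigma$ into the branches of that space, so every branch computes $\sigma$ and hence $T$; the observation that a continuous image of $2^\omega$ that is non-constant on every clopen piece is perfect is the small lemma linking the two. By contrast, the non-existence of a winning strategy for player 2, the continuity of $W$, and the pointclass bookkeeping that selects the right determinacy hypothesis in each case are all routine.
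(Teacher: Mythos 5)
The paper states this theorem without proof, citing Martin and Slaman--Steel, so there is no in-paper argument to compare against; your proof is correct and is essentially the standard one. The game (player 1 wins iff $w \in A$ and $w \geq_T z$), the cofinality argument ruling out a winning strategy for player 2, and the pointclass bookkeeping that matches each determinacy hypothesis are all in order. The device of restricting the output map $W$ of player 1's winning strategy to the compact Cantor set $C$ that codes $\sigma$ into its odd bits is a clean way to get both closure of the image and pointedness of the resulting tree in one step; the claim $T \leq_T \sigma$ goes through because the game's move order makes $W(z)\uh(n+1)$ depend only on $z\uh n$ together with $\sigma$, so $T$ can be computed level by level. Some published presentations build the perfect tree by an explicit recursion rather than by taking $W[C]$ outright, but the underlying ideas (use $\sigma$, force splitting via unbounded Turing degrees, code $\sigma$ into every branch) are the same.
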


There is a simple observation that yields a useful strengthening of this theorem. Namely, suppose $\{A_n\}_{n \in \omega}$ is a countable sequence of subsets of $\omega^\omega$ whose union is cofinal in the Turing degrees. Then there must be some $n$ such that $A_n$ is cofinal in the Turing degrees. Thus we have the following theorem.

\begin{theorem}
\label{thm:pointedperfecttree}
Suppose $\seq{A_n}_{n \in \omega}$ is a countable sequence such that $\bigcup_{n \in \omega}A_n$ is cofinal in the Turing degrees. Then:
\begin{itemize}
\item If each $A_n$ is Borel then there is some $n$ and pointed perfect tree $T$ such that $A_n$ contains $[T]$.
\item If $\mathbf{\Pi}^1_1$-determinacy holds and each $A_n$ is $\mathbf{\Pi}^1_1$ then there is some $n$ and pointed perfect tree $T$ such that $A_n$ contains $[T]$.
\item If $\AD$ holds then there is some $n$ and pointed perfect tree $T$ such that $A_n$ contains $[T]$.
\end{itemize}
\end{theorem}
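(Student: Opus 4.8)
The plan is to reduce directly to the single-set version of the pointed perfect tree theorem stated immediately above, using the elementary observation (flagged in the text) that a countable union which is cofinal in the Turing degrees must have at least one cofinal member.

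First I would establish this observation by contradiction. Suppose $\bigcup_n A_n$ is cofinal in the Turing degrees but no single $A_n$ is. Unwinding the definition of cofinality, for each $n$ there is a real $x_n$ such that no $y \geq_T x_n$ lies in $A_n$; fix such a sequence $\seq{x_n}_{n \in \omega}$ (under $\AD$ this requires only countable choice for sets of reals, which is a consequence of $\AD$). Let $x \defeq \bigoplus_n x_n$, so $x \geq_T x_n$ for every $n$. By cofinality of the union there is $y \in \bigcup_n A_n$ with $y \geq_T x$; say $y \in A_m$. Then $y \in A_m$ while $y \geq_T x \geq_T x_m$, contradicting the choice of $x_m$.

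Given the observation, fix $n$ with $A_n$ cofinal. If each $A_k$ is Borel then $A_n$ is Borel; if $\mathbf{\Pi}^1_1$-determinacy holds and each $A_k$ is $\mathbf{\Pi}^1_1$ then $A_n$ is $\mathbf{\Pi}^1_1$; under $\AD$ no complexity hypothesis is needed. In each case the matching clause of the single-set theorem applies to $A_n$ and produces a pointed perfect tree $T$ with $[T] \subseteq A_n$, which is exactly the conclusion.

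I do not expect a genuinely hard step here once the single-set theorem is in hand: the content is entirely the pigeonhole-style observation in the first step. The only points deserving any care are the appeal to $\mathsf{AC}_\omega(\R)$ in the $\AD$ case when selecting the witnesses $x_n$, and the trivial remark that each member of a countable Borel (resp.\ $\mathbf{\Pi}^1_1$) family is itself Borel (resp.\ $\mathbf{\Pi}^1_1$), so that passing from the family to the single set $A_n$ loses no definability.
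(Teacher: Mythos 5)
Your proposal is correct and matches the paper's approach: the paper explicitly frames this theorem as a corollary of the single-set version via the observation that a cofinal countable union must have a cofinal member, which is exactly your pigeonhole step. Your extra remark about $\mathsf{AC}_\omega(\mathbb{R})$ under $\AD$ is a careful detail the paper leaves implicit.
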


\section{Posner-Robinson $\implies$ Solecki}
\label{sec:prtosolecki}

In this section, we will assume a version of the Posner-Robinson theorem (specifically Theorem~\ref{thm:pr3}) and use it to prove a version of the Solecki dichotomy (specifically Theorem~\ref{thm:solecki3}). Here's a brief outline of the proof. First, for any functions $f, g \colon \omega^\omega \to \omega^\omega$, we will introduce a game, $G(f,g)$, and show that player 2 has a winning strategy in this game if and only if $f \leq_w g$. We will then show that in the special case of the game $G(J, f)$, player 1 has a winning strategy if and only if $f$ is $\sigma$-continuous. It is in this step that we will make use of the Posner-Robinson theorem. Finally, it will be clear from the definition of $G(f, g)$ that as long as $f$ and $g$ are both Borel functions then the payoff set of $G(f, g)$ is also Borel. Thus by Borel determinacy, if $f \colon \omega^\omega \to \omega^\omega$ is Borel then either player 1 wins $G(J, f)$ or player 2 does. In the first case, $f$ is $\sigma$-continuous and in the second case, $J \leq_w f$ and so we have reached the dichotomy in the statement of Theorem~\ref{thm:solecki3}.

The game $G(f, g)$ is played as follows: player 2 first plays a code $e \in \omega$ for a three place Turing functional\footnote{Here we use the phrase \term{Turing functional} to indicate a partial computable function on real numbers.}. For the rest of the game, player 1 plays a real $x$ and player 2 plays two reals, $y$ and $z$. On every turn, player 1 will play one more digit of the real $x$ and player 2 will play one more digit of the the real $z$ and either one or zero more digits of the real $y$. Player 2 wins if they eventually play all digits of $y$ (in other words, player 2 can delay arbitrarily long between playing one digit of $y$ and the next but if they delay forever then they forfeit the game) and $f(x) = \Phi_e(g(y), x, z)$. Otherwise, player 1 wins. The game can be pictured as follows.
\vspace{5pt}
\begin{align*}
\begin{array}{c | c c c c c c c c c c l}
  \text{player } 1 &   & x_0 &          & x_1 &     & \ldots & x_n &          & \ldots &\quad\quad& x = x_0x_1x_2\ldots\\
  \cline{1-10}
  \text{player } 2 & e &     & y_0, z_0 &     & z_1 & \ldots &     & y_1, z_n & \ldots &\quad\qquad& y = y_0y_1y_2\ldots,\quad z = z_0z_1z_2\ldots
\end{array}\\
\end{align*}

We can understand this game as follows. First recall that $f \leq_w g$ means that there are partial continuous functions $\phi$ and $\psi$ such that for all $x$, $f(x) = \psi(g(\phi(x)), x)$. Further, recall that a continuous function is just a computable function relative to some oracle. Hence for some code $e \in \omega$ for a Turing functional and some $z \in \omega^\omega$, $\psi(x, y) = \Phi_e(x, y, z)$.

In the game $G(f, g)$, player 2 is trying to convince player 1 that $f \leq_w g$. The natural number $e$ and real $z$ played by player 2 should be thought of as specifying the continuous function $\psi$. Player 1's moves consist of a challenge input $x$ for $f$ and the real $y$ played by player 2 corresponds to $\phi(x)$. The winning condition for player 2---that player 2 plays infinitely many digits of $y$ and that $f(x) = \Phi_e(g(y), x, z)$---corresponds to the reduction procedure implicitly specified by player 2 working successfully on input $x$.

\begin{lemma}
\label{lem:abovejump}
Player 2 wins $G(f, g)$ if and only if $f \leq_w g$.
\end{lemma}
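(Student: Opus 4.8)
The proof breaks into the two directions of the biconditional, and both amount to translating between a winning strategy for player 2 and a pair of partial continuous witnessing functions $\phi, \psi$.

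For the direction ``$f \leq_w g \implies$ player 2 wins'', suppose we are given partial continuous $\phi \colon \omega^\omega \to \omega^\omega$ and $\psi \colon \omega^\omega \times \omega^\omega \to \omega^\omega$ with $f(x) = \psi(g(\phi(x)), x)$ for all $x$. Since $\psi$ is partial continuous, there is a code $e \in \omega$ and a real $z$ such that $\psi(u, x) = \Phi_e(u, x, z)$ for all $(u,x) \in \dom(\psi)$. I would have player 2 open by playing this $e$, and then over the course of the game play the digits of this fixed $z$ one at a time while computing $y = \phi(x)$ from player 1's moves. The one subtlety to address is timing: player 2 is only allowed to commit to digits of $y$ as they become ``justified'' by initial segments of $x$ via the continuity of $\phi$, and conversely must eventually play \emph{every} digit of $y$. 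Because $\phi$ is total and continuous, for each $n$ there is a finite initial segment of $x$ forcing $\phi(x)\uh_n$, so this is a routine bookkeeping argument — player 2 plays the next digit of $y$ as soon as it is forced, and since $x$ is eventually entirely revealed, all digits of $y$ get played. Then $f(x) = \psi(g(\phi(x)), x) = \Phi_e(g(y), x, z)$, so player 2 wins every play consistent with this strategy.

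For the converse, ``player 2 wins $\implies f \leq_w g$'', fix a winning strategy $\tau$ for player 2. The strategy $\tau$ first outputs some fixed $e \in \omega$, and during the play it produces digits of $z$ and $y$ as a continuous function of player 1's moves $x$ (so far). I would define $\phi(x)$ to be the real $y$ that $\tau$ produces when player 1 plays out all of $x$, and let $z_x$ be the real $z$ that $\tau$ produces; both $x \mapsto \phi(x)$ and $x \mapsto z_x$ are (total) continuous functions of $x$ since $\tau$ is a strategy and, because $\tau$ is \emph{winning}, player 2 never forfeits, so all digits of $y$ really do get played. Now set $\psi(u, x) \defeq \Phi_e(u, x, z_x)$; this is a partial continuous function of $(u,x)$ because $e$ is fixed, $z_x$ depends continuously on $x$, and $\Phi_e$ is computable. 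Since $\tau$ is winning, for every $x$ the resulting play is a win for player 2, i.e.\ $f(x) = \Phi_e(g(y), x, z_x) = \psi(g(\phi(x)), x)$, which is exactly the statement $f \leq_w g$.

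**Main obstacle.**
Neither direction is deep, but the one point requiring genuine care is the asynchronous timing of player 2's moves on the $y$-tape: player 2 may stall arbitrarily long between successive digits of $y$, and the winning condition penalizes only stalling \emph{forever}. So in the ``if'' direction I must check that the natural strategy (play a digit of $y$ once the continuity of $\phi$ forces it) never stalls forever, using that $\phi$ is total so every coordinate of $\phi(x)$ is eventually determined; and in the ``only if'' direction I must use precisely the hypothesis that $\tau$ is \emph{winning} (not merely legal) to conclude that $\phi$ is genuinely total, since a non-winning strategy for player 2 could legally stall on $y$ forever and leave $\phi(x)$ undefined. Everything else is a direct unwinding of the definitions together with the standard fact — noted already in the excerpt — that partial continuous functions on $\omega^\omega$ are exactly the oracle-computable partial functions.
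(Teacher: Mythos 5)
Your proposal is correct and follows essentially the same approach as the paper: in one direction you extract $\phi$ and $\psi$ from a winning strategy $\tau$ by running the game (defining $\phi(x)$ as the $y$-output and $\psi(u,x) = \Phi_e(u,x,z_x)$ where $z_x$ is the $z$-output), and in the other direction you build a strategy that plays $e$, streams $z$, and emits digits of $\phi(x)$ as they become determined. Your explicit treatment of the timing bookkeeping and of why $\phi$ is total (because $\tau$ is \emph{winning}, not merely legal) is slightly more detailed than the paper's but the underlying argument is identical.
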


\begin{proof}
$(\implies)$ Suppose that player 2 wins $G(f, g)$ via the strategy $\tau$. Then the following two procedures describe partial continuous functions $\omega^\omega \to \omega^\omega$ and $\omega^\omega\times\omega^\omega \to \omega^\omega$, respectively.
\begin{enumerate}
\item Given $x$ as input: play the game $G(f, g)$, using the digits of $x$ as player 1's moves and using $\tau$ to generate player 2's moves. Output the first of the two reals played by player 2 (the real referred to as $y$ in the description of the game above).
\item Given $w$ and $x$ as input: play the game $G(f, g)$ using the digits of $x$ as player 1's moves and using $\tau$ to generate player 2's moves. Let $e$ be number played by $\tau$ on the first move of the game and let $z$ be the second of the two reals played by player 2. Output $\Phi_e(w, x, z)$.
\end{enumerate}
Let $\phi$ and $\psi$, respectively, denote these two continuous functions. Then the fact that $\tau$ is a winning strategy for player 2 in the game $G(f, g)$ ensures that for all $x$, $f(x) = \psi(g(\phi(x)), x)$.

$(\impliedby)$ Suppose that $f \leq_w g$ via partial continuous functions $\phi$ and $\psi$. Let $e \in \omega$ and $z\in\omega^\omega$ be such that $\psi$ is computed by the $e^{\text{th}}$ Turing functional with oracle $z$. Then the following is a winning strategy for player 2 in the game $G(f, g)$. On the first turn, play the number $e$. On each subsequent turn, play one more digit of $z$. Also on each of these turns, if player 1 has played enough digits of $x$ to determine one more digit of $\phi(x)$, play that as well.
\end{proof}

We now turn to the case where player 1 wins and the game is of the form $G(J, f)$ for some $f$. We will show that in this case, $f$ must be $\sigma$-continuous. To prove this, we first need the following observation about $\sigma$-continuity.

\begin{observation}
\label{obs:sigmacontinuous}
A function $f \colon \omega^\omega\to\omega^\omega$ is $\sigma$-continuous if and only if there is some real $z$ such that for all $x$, $f(x) \leq_T x\oplus z$.
\end{observation}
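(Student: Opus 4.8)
The plan is to prove both directions through the standard correspondence between partial continuous functions on $\omega^\omega$ and oracle computations: a partial function $h$ is continuous with respect to the subspace topology on its domain if and only if there is a real $z_h$ and an index $e$ such that $h(x) = \Phi_e(x \oplus z_h)$ for every $x \in \dom(h)$; in particular $h(x) \leq_T x \oplus z_h$ for all $x \in \dom(h)$, uniformly.

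For the backward direction, suppose $z$ is a real with $f(x) \leq_T x \oplus z$ for every $x$. For each $e \in \omega$ put $A_e \defeq \{x \in \omega^\omega \mid \Phi_e(x \oplus z) \text{ is total and equals } f(x)\}$. By hypothesis $\bigcup_e A_e = \omega^\omega$, so setting $B_e \defeq A_e \setminus \bigcup_{e' < e} A_{e'}$ yields a partition of $\omega^\omega$ (allowing empty pieces). On each $B_e$ the function $f$ agrees with $x \mapsto \Phi_e(x \oplus z)$, which, since $z$ is fixed, is a partial continuous function of $x$. Hence $f\uh_{B_e}$ is continuous with respect to the subspace topology on $B_e$, so $f$ is $\sigma$-continuous.

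For the forward direction, let $\{A_n\}_{n \in \omega}$ be a partition of $\omega^\omega$ such that each $f\uh_{A_n}$ is continuous with respect to the subspace topology on $A_n$. By the correspondence above, fix for each $n$ a real $z_n$ with $f(x) \leq_T x \oplus z_n$ for all $x \in A_n$, and set $z \defeq \bigoplus_{n \in \omega} z_n$. Given any $x \in \omega^\omega$, it lies in some $A_n$, and then $f(x) \leq_T x \oplus z_n \leq_T x \oplus z$. Since this holds for every $x$, the real $z$ is as required. (Note that we never need to compute from $x$ which piece it belongs to: only the existence of some Turing reduction matters.)

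The one nontrivial ingredient is the coding fact used in the forward direction — that a function continuous merely with respect to the subspace topology on an arbitrary set $A_n$ (which need not be closed, $G_\delta$, or Borel, and past which $f\uh_{A_n}$ need not extend continuously) is nonetheless uniformly computable on $A_n$ from $x$ together with a single real parameter. This is exactly the subtlety flagged after the definition of $\sigma$-continuity, and it follows from representing such a function by a monotone map $\rho \colon \omega^{<\omega} \to \omega^{<\omega}$: for $\sigma \in \omega^{<\omega}$ one takes $\rho(\sigma)$ to be (essentially) the longest string $\tau$ with $|\tau| \leq |\sigma|$ such that $\tau \preceq f(x')$ for every $x' \in A_n \cap [\sigma]$. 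Monotonicity is immediate, continuity of $f$ on $A_n$ guarantees $\rho(x\uh m) \to f(x)$ for each $x \in A_n$, and $\rho$, viewed as an element of $\omega^\omega$, serves as the parameter $z_n$. (Alternatively, one may first extend $f\uh_{A_n}$ to a continuous function on a $G_\delta$ superset of $A_n$ by Kuratowski's theorem and code that instead.)
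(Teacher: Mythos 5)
Your proof is correct and follows essentially the same route as the paper's: for the backward direction you partition $\omega^\omega$ according to which index $e$ witnesses $f(x) = \Phi_e(x\oplus z)$, and for the forward direction you join the oracles for the countably many partial continuous pieces. The only difference is that you spell out in detail the standard coding fact that a partial function continuous on its (arbitrary) domain is uniformly computable there from a single real parameter, whereas the paper takes this for granted; your monotone-map $\rho$ construction of that parameter is fine (with the minor bookkeeping that $\rho(\sigma)$ only needs to behave well when $A_n\cap[\sigma]\neq\emptyset$, which is all that matters for $x\in A_n$).
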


\begin{proof}
If $f$ is $\sigma$-continuous then it can be written as a countable union of partial continuous functions. Each partial continuous function on $\omega^\omega$ is a partial computable function relative to some oracle, so we can find a countable sequence of codes for Turing functionals $\{e_n\}_{n \in \omega}$ and oracles $\{z_n\}_{n \in \omega}$ such that for each $x$, $f(x) = \Phi_{e_n}(x, z_n)$ for some $n$. So if we take $z = \bigoplus_{n \in \omega}z_n$ then for each $x$, $f(x) \leq_T x\oplus z$.

Conversely, suppose there is some $z$ such that for all $x$, $f(x) \leq_T x \oplus z$. For each $n$, define $A_n = \{x \mid f(x) = \Phi_n(x, z)\}$. Then $\{A_n\}_{n \in \omega}$ is a countable sequence of sets whose union covers $\omega^\omega$. Also, for each $n$, $f\uh_{A_n}$ is computable relative to $z$ via $\Phi_n$ and hence continuous. So $f$ is $\sigma$-continuous.
\end{proof}

\begin{lemma}
\label{lem:sigmacontinuous}
Player 1 wins $G(J, f)$ if and only if $f$ is $\sigma$-continuous.
\end{lemma}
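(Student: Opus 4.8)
The plan is to prove the two implications separately. The direction $(\Leftarrow)$ --- that if $f$ is $\sigma$-continuous then player~1 has a winning strategy --- I would prove by an explicit construction, which incidentally explains why $G(J,f)$ is set up the way it is. For the direction $(\Rightarrow)$ I would argue by contraposition: assuming $f$ is not $\sigma$-continuous, I would show that every strategy for player~1 can be defeated, and it is precisely here that the cone form of the Posner--Robinson theorem (Theorem~\ref{thm:pr3}) does its work.

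For $(\Leftarrow)$, suppose $f$ is $\sigma$-continuous and use Observation~\ref{obs:sigmacontinuous} to fix a real $z_0$ with $f(y)\leq_T y\oplus z_0$ for every $y$. Player~1's strategy is: after player~2 opens with a code $e$, play a real $x$ that codes $z_0$ and progressively codes all of player~2's later moves $y$ and $z$. Since player~1 sees each move of player~2 before it has to commit more than finitely many further moves of its own, a routine interleaving/bookkeeping argument makes this possible, with the effect that the resulting $x$ satisfies $x\geq_T z_0\oplus y\oplus z$ whenever player~2 eventually plays all of $y$. Now take any play following this strategy. If player~2 never finishes $y$, player~2 loses outright. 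Otherwise $x\geq_T z_0\oplus y\oplus z$, so $f(y)\leq_T y\oplus z_0\leq_T x$ and hence $\Phi_e(f(y),x,z)\leq_T f(y)\oplus x\oplus z\leq_T x$; but $J(x)\not\leq_T x$, so $J(x)\neq\Phi_e(f(y),x,z)$ and again player~2 loses. Thus player~1 wins $G(J,f)$.

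For $(\Rightarrow)$, suppose $f$ is not $\sigma$-continuous, let $\sigma$ be an arbitrary player~1 strategy, and let $s$ be a real coding $\sigma$; I want a play consistent with $\sigma$ that player~1 loses. Let $c$ be the base of the Posner--Robinson cone of Theorem~\ref{thm:pr3}. Because $f$ is not $\sigma$-continuous, Observation~\ref{obs:sigmacontinuous} (contrapositive, with parameter $s\oplus c$) gives a real $x_0$ with $f(x_0)\not\leq_T x_0\oplus s\oplus c$; put $w_0=x_0\oplus s\oplus c$, so $w_0$ is in the cone and $f(x_0)\not\leq_T w_0$. Then Theorem~\ref{thm:pr3} yields a real $b$ with $f(x_0)\oplus b\oplus w_0\geq_T(b\oplus w_0)'$, say via a Turing functional with index $p$. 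Now let player~2 open with a code $e$ (specified below), then play $y=x_0$ one digit per turn, and play $z$ equal to a fixed join coding $s,c,x_0,b$ and the indices $e,p,d$, one digit per turn, where $d$ is an index for the fixed procedure that decodes these parameters from an oracle and simulates the game under $\sigma$; let $x$ be $\sigma$'s output against this play. Since $s,c,x_0,e\leq_T z$, we have $x=\Phi_d^z$, and $z\equiv_T b\oplus w_0$ (the rest of the coded data being finite). Hence $x'\leq_T z'\equiv_T(b\oplus w_0)'\leq_T f(x_0)\oplus b\oplus w_0\leq_T f(x_0)\oplus z$, all by uniform reductions using $d$ and $p$. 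Therefore the functional $\Phi_e$ which, on input $(\alpha,\beta,\gamma)$, decodes $\gamma$, recomputes $x=\Phi_d^{\gamma}$, computes $x'$ from $\alpha\oplus\gamma$ as above, and then outputs $J(x)$ (computable from $x\oplus x'$) satisfies $\Phi_e(f(x_0),x,z)=J(x)$. Player~2 also finishes playing $y=x_0$, so player~2 wins this play, and $\sigma$ is not winning for player~1.

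The step I expect to be the main obstacle is the apparent circularity in $(\Rightarrow)$: the real $x$ produced by $\sigma$ depends on player~2's move $z$, yet $z$ has to encode enough about $x$ (and about the relevant Turing reductions) for $\Phi_e$ to reconstruct $J(x)$. The way around it is the layering above --- the Posner--Robinson witness $b$ is extracted from $w_0=x_0\oplus s\oplus c$ alone and mentions neither $z$ nor $x$, so once $b$ and the indices $p,d$ are fixed they can be bundled into $z$, after which $x=\Phi_d^z$ genuinely is computable from $z$ and no appeal to the recursion theorem is needed. A minor related point is that the Posner--Robinson reduction $(b\oplus w_0)'\leq_T f(x_0)\oplus b\oplus w_0$ need not be uniform across instances, which is exactly why its index $p$ is placed into $z$ rather than built into $\Phi_e$. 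Note finally that this lemma uses the Posner--Robinson theorem but no determinacy; Borel determinacy enters this section only in passing from the two lemmas to the dichotomy itself.
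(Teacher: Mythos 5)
Your proof is correct and follows the same overall architecture as the paper's. The $(\Leftarrow)$ direction is the same copying strategy (interleave the $\sigma$-continuity oracle with player 2's moves, then conclude via $J(x) \nleq_T x$). In the $(\Rightarrow)$ direction, both arguments fix an arbitrary strategy $\sigma$, use Observation~\ref{obs:sigmacontinuous} to find a point where $f$ escapes the oracle $x \oplus \sigma \oplus (\text{cone base})$, apply Posner--Robinson to get a witness, and then have player 2 play a $z$ rich enough to reconstruct player 1's response. The one genuine difference is how the index $e$ is produced: the paper observes that $x = \sigma * (e,y,z)$ depends on $e$, giving a circularity that it breaks with the recursion theorem, whereas you instead have $z$ encode $e$ itself (together with $p$, $d$, and the other data), so that $\Phi_e$ reads everything it needs from its third argument $\gamma$ and its own definition never mentions $e$. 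This sidesteps the recursion theorem at the cost of extra bookkeeping, and your ordering of the construction --- first $c, s, x_0, b, p, d$, then $e$, then $z$ --- is exactly what makes the bootstrapping well-founded; the two handlings are standard and equivalent. One very small wording nit: the chain $x' \leq_T z' \equiv_T (b \oplus w_0)' \leq_T f(x_0) \oplus b \oplus w_0 \leq_T f(x_0) \oplus z$ is uniform not just in $d$ and $p$ but also in $e$, since reconstructing $z$ from $b \oplus w_0$ requires the finite data $e, p, d$; this is harmless because $\Phi_e$ decodes $e$ from $\gamma$ along with the others, but it is worth stating.
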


\begin{proof}
$(\implies)$ Suppose that player 1 wins $G(J, f)$ via the strategy $\sigma$. Let $w$ be the base of a cone for which the conclusion of Theorem~\ref{thm:pr3} applies (i.e.\ such that the Posner-Robinson theorem holds relative to every real which computes $w$).

We claim that for every $y$, $f(y) \leq_T y \oplus \sigma \oplus w$ and hence $f$ is $\sigma$-continuous by Observation~\ref{obs:sigmacontinuous}. To prove this, we will show that if not then $\sigma$ is not actually a winning strategy for player 1. So suppose that there is some $y$ such that $f(y) \nleq_T y \oplus \sigma \oplus w$. Since the Posner-Robinson theorem holds relative to $y\oplus \sigma\oplus w$, we can find some real $v$ such that $f(y)\oplus v\oplus y\oplus \sigma\oplus w \geq_T (v\oplus y\oplus \sigma\oplus w)'$.

We will now explain how to win while playing as player 2 in $G(J, f)$ against the strategy $\sigma$. We will play as follows: first we play some number $e \in \omega$, which we will explain how to choose later. Then we ignore player 1's moves entirely and play the reals $y$ and $z = v \oplus y\oplus \sigma \oplus w$. Note that from $z$ we can compute player 1's moves since $z$ computes both player 1's strategy $\sigma$ and all of player 2's moves. In other words, if $x$ is the real played by player 1 then $x \leq_T z$. Hence by our choice of $z$ we have
\[
  x' \leq_T z' \leq_T f(y)\oplus z.
\]
This is almost what we want, but there is one problem: for player 2 to win, we need not just that $f(y) \oplus z$ computes $x'$, but that it does so via the Turing functional specified by player 2. And the only problem with this is that the Turing functional which computes $x'$ from $f(y)$ and $z$ depends on the code $e$ played by player 2. However, we can get around this by using the recursion theorem.

In precisely, note that while the computation of $x'$ from $f(y)\oplus z$ depends on the value of $e$ (because the value of $x$ itself depends on $e$), the dependence is uniform in $e$\footnote{This is because the computation of $z'$ from $f(y) \oplus z$ does not depend on $e$ and the computation of $x$ from $z$, and hence $x'$ from $z'$ is uniform in $e$.}. In other words, there is some $a \in \omega$ such that for all $e$,
\[
  \Phi_a(f(y), z, e) = (\sigma * (e, y, z))'
\]
where by $(\sigma * (e, y, z))$ we mean the real played by the strategy $\sigma$ in response to player 2 playing $e$ along with the reals $y$ and $z$. Thus by the recursion theorem, we can find some $e$ such that
\[
  \Phi_e(f(y), z) = \Phi_a(f(y), z, e) = (\sigma * (e, y, z))'.
\]
Thus by playing this $e$ as our first move as player 2 (and then playing $y$ and $z$), we can win against $\sigma$.

$(\impliedby)$ Suppose that $f$ is $\sigma$-continuous. Thus by our observation, there is some $w$ such that for all $x$, $f(x) \leq_T x \oplus w$. Consider the following strategy for player 1 in the game $G(J, f)$: alternate playing digits of $w$ and copying the moves played by player 2. We can picture this strategy as follows.
\vspace{5pt}
\begin{align*}
\begin{array}{c | c c c c c c c c c}
  \text{player } 1 &   & w_0 &          & \ip{y_0}{z_0} &     & w_1 &          & \seq{z_1} & \ldots \\
  \cline{1-10}
  \text{player } 2 & e &     & y_0, z_0 &               & z_1 &     & y_1, z_2 &     & \ldots 
\end{array}\\
\end{align*}

We claim this is a winning strategy for player 1. To see why, suppose player 1 follows this strategy and that player 2 plays $e \in \omega$ and $y, z \in \omega^\omega$. Then the real $x$ played by player 1 will compute $w\oplus y\oplus z$. Player 2 can only win if $\Phi_e(f(y), x, z) = J(x)$, but this is impossible since we have
\[
  f(y) \leq_T y \oplus w \leq_T x
\]
and hence $\Phi_e(f(y), x, z) \leq_T x$, but $J(x) \nleq_T x$.
\end{proof}

We can now finish our proof of the Solecki dichotomy from the Posner-Robinson theorem.

\begin{proof}[Proof of Theorem~\ref{thm:solecki3} from Theorem~\ref{thm:pr3}]
Let $f\colon \omega^\omega \to \omega^\omega$ be a Borel function and consider the game $G(J, f)$. By Borel determinacy, either player 1 or player 2 has a winning strategy for this game. In the former case, $f$ is $\sigma$-continuous by Lemma~\ref{lem:sigmacontinuous}. In the latter case, $J \leq_w f$ by Lemma~\ref{lem:abovejump}.
\end{proof}

The results of this section raise an obvious question. Namely, is it possible to use the Posner-Robinson theorem to prove the full Solecki dichotomy (i.e.\ either Theorem~\ref{thm:solecki1} or Theorem~\ref{thm:solecki2})? Let us mention one possible route to such a proof. In this section, we described a game, $G(f, g)$, which can be used to characterize weak reducibility of $f$ to $g$ and this game was the key to our proof of Theorem~\ref{thm:solecki3}. It seems plausible that finding a game which characterizes reducibility of $f$ to $g$, rather than weak reducibility, would yield a proof of Theorem~\ref{thm:solecki2}. Finding such a game might also be of independent interest.

\begin{question}
Can the Posner-Robinson theorem together with Borel determinacy be used to prove either Theorem~\ref{thm:solecki1} or Theorem~\ref{thm:solecki2}?
\end{question}

\begin{question}
Is there a game characterizing reducibility in the same way that the game $G(f, g)$ described above characterizes weak reducibility?
\end{question}

\section{Solecki $\implies$ Posner-Robinson}
\label{sec:soleckitopr}

In this section we will assume a version of the Solecki dichotomy (specifically Theorem~\ref{thm:solecki3}) and use it to prove a version of the Posner-Robinson theorem (specifically Theorem~\ref{thm:pr3}). We will do so by proving the contrapositive: we will show that if Theorem~\ref{thm:pr3} fails then so does Theorem~\ref{thm:solecki3}. Also, as we mentioned in the introduction, our proof is carried out assuming $\mathbf{\Pi}^1_1$-determinacy, a statement which is not provable in $\ZFC$, but which is provable from $\ZFC$ plus the existence of a measurable cardinal.

The core idea of the proof is very simple---it essentially consists of the observation that if $f$ is a function which takes each real $x$ to a witness of the failure of the Posner-Robinson theorem relative to $x$ then $f$ does not satisfy the conclusion of the Solecki dichotomy. However, this simple idea is complicated by the need to make sure $f$ is Borel (otherwise we cannot invoke Theorem~\ref{thm:solecki3}). Most of the details of the proof will be devoted to overcoming this obstacle.

We will now go into the details of the proof. Suppose that Theorem~\ref{thm:pr3} fails. In other words, the set
\[
  A = \{x \in \omega^\omega \mid \text{the Posner-Robinson theorem holds relative to } x\}
\]
does not contain any cone of Turing degrees. Hence its complement, the set
\[
  B = \{x \in \omega^\omega \mid \text{the Posner-Robinson theorem fails relative to } x\},
\]
is cofinal in the Turing degrees.

Now suppose that we can find a function $f \colon B \to \omega^\omega$ such that for each $x$ in $B$, $f(x)$ is a witness to the failure of the Posner-Robinson theorem relative to $x$---i.e.\ $f(x) \nleq_T x$ and there is no $y$ such that $f(x) \oplus y \oplus x \geq_T (y \oplus x)'$. Extend $f$ to a total function on $\omega^\omega$ by setting $f(x) = 0$ for all $x \notin B$. Note that this modified version of $f$ has the following two properties. 
\begin{enumerate}[(1)]
\item For cofinally many $x$, $f(x) \nleq_T x$.
\item For all $x$, there is no $y$ such that $f(x) \oplus y \oplus x \geq_T (y \oplus x)'$. When $x \in B$ this is by assumption and when $x \notin B$ this is because $f(x)$ is computable.
\end{enumerate}
The next two lemmas show that these properties imply that $f$ is a counterexample to the Solecki dichotomy.

\begin{lemma}
\label{lem:counterexample1}
Suppose that $f \colon \omega^\omega \to \omega^\omega$ is a function such that for a set of $x$ which is cofinal in the Turing degrees, $f(x) \nleq_T x$. Then $f$ is not $\sigma$-continuous. 
\end{lemma}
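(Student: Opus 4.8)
The plan is to prove the contrapositive via Observation~\ref{obs:sigmacontinuous}. That observation tells us that if $f$ were $\sigma$-continuous, there would be a \emph{single} real $z$ such that $f(x) \leq_T x \oplus z$ for every $x \in \omega^\omega$. So I would start the proof by assuming, toward a contradiction, that $f$ is $\sigma$-continuous and fixing such a $z$.

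The second and only real step is to exploit the cofinality hypothesis. Let $S = \{x \in \omega^\omega \mid f(x) \nleq_T x\}$; by assumption $S$ is cofinal in the Turing degrees, so in particular it meets the cone above $z$: there is some $x \in S$ with $x \geq_T z$. For this $x$ we have $x \oplus z \equiv_T x$, and hence
\[
  f(x) \leq_T x \oplus z \equiv_T x,
\]
which contradicts $x \in S$. Therefore no such $z$ exists and, by Observation~\ref{obs:sigmacontinuous}, $f$ is not $\sigma$-continuous.

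I do not anticipate any genuine obstacle here: the argument is a one-line application of the characterization of $\sigma$-continuity together with the elementary fact that a set cofinal in the Turing degrees meets every cone. The only points requiring a small amount of care are invoking Observation~\ref{obs:sigmacontinuous} in the correct direction (we use ``$\sigma$-continuous $\Rightarrow$ bounded by a single oracle'') and recording that $x \geq_T z$ makes $x \oplus z$ Turing-equivalent to $x$. The substantive difficulty in this section lies elsewhere—namely, arranging that the witnessing function $f$ can be taken Borel—and is handled separately; the present lemma contributes only the ``not $\sigma$-continuous'' half of the conclusion that $f$ violates Theorem~\ref{thm:solecki3}.
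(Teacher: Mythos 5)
Your proof is correct and follows essentially the same approach as the paper's: assume $\sigma$-continuity, apply Observation~\ref{obs:sigmacontinuous} to get a single oracle $z$, use cofinality to pick $x \geq_T z$ with $f(x) \nleq_T x$, and derive a contradiction from $x \oplus z \equiv_T x$.
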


\begin{proof}
For contradiction, assume $f$ is $\sigma$-continuous. By Observation~\ref{obs:sigmacontinuous}, there must be some $z$ such that for every $x$, $f(x) \leq_T x \oplus z$. By assumption, we can find some $x \geq_T z$ such that $f(x) \nleq_T x$. But since $x \equiv_T x \oplus z$, this implies $f(x) \nleq_T x\oplus z$, which contradicts our choice of $z$.
\end{proof}

\begin{lemma}
\label{lem:counterexample2}
Suppose that $f \colon \omega^\omega \to \omega^\omega$ is a function such that for all $x$, there is no $y$ such that $f(x) \oplus y \oplus x \geq_T (y \oplus x)'$. Then $J \nleq_w f$.
\end{lemma}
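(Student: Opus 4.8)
The plan is to prove the contrapositive: assuming $J \leq_w f$, I will produce, for some $x$, a real $y$ witnessing the Posner--Robinson conclusion relative to $x$, contradicting the hypothesis on $f$. Recall that $J \leq_w f$ means there are partial continuous functions $\phi \colon \omega^\omega \to \omega^\omega$ and $\psi \colon \omega^\omega \times \omega^\omega \to \omega^\omega$ with $J(x) = \psi(f(\phi(x)), x)$ for all $x$. Since continuous functions are computable relative to an oracle, fix a real $z_0$ computing (indices for) both $\phi$ and $\psi$. The key idea is that for each $x$, the value $y \defeq \phi(x)$ is a candidate witness: we have $x' \equiv_T J(x) = \psi(f(y), x) \leq_T f(y) \oplus x \oplus z_0$, so $f(y) \oplus x \oplus z_0$ computes $x'$, which is most of what we need for $(x \oplus z_0)'$.

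**Key steps.** First I would observe that it suffices to work relative to $z_0$: replacing $x$ by $x \oplus z_0$ throughout, the relation $J(x) = \psi(f(\phi(x)), x)$ still gives $x' \leq_T f(\phi(x)) \oplus x$ whenever $z_0 \leq_T x$, because then $z_0$ is recoverable from $x$. Second, set $y = \phi(x)$ and note $(y \oplus x)' \leq_T x'$ is false in general, so I cannot naively conclude; the correct target is $f(x) \oplus y \oplus x \geq_T (y \oplus x)'$ with $f(x)$ in place of $f(y)$ — wait, the statement of the lemma has $f(x)$, not $f(\phi(x))$. So the actual maneuver is different: I need $y$ such that $f(x) \oplus y \oplus x \geq_T (y \oplus x)'$. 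Here the trick is to let $x$ itself be chosen as a fixed point. Concretely, pick $x$ with $z_0 \leq_T x$ and such that $\phi(x) \leq_T x$ — for instance any sufficiently generic or any pointed-tree path will do, but more simply: choose $x$ on a pointed perfect tree so that the continuous image structure is controlled. Then with $y$ chosen appropriately (e.g.\ $y$ a name for the computation coding $x$ and the relevant indices), one arranges $(y \oplus x)' \leq_T J(\text{something}) \leq_T f(x) \oplus y \oplus x$ via the uniformity of $J$ and the recursion theorem, exactly paralleling the recursion-theorem argument in the proof of Lemma~\ref{lem:sigmacontinuous}.

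**Main obstacle.** The hard part will be matching the quantifier structure: the Solecki side gives us control over $f$ at the point $\phi(x)$, but the Posner--Robinson conclusion we must contradict speaks about $f(x)$ for \emph{every} $x$. The bridge is to engineer a single real $x$ that simultaneously (a) computes $z_0$, (b) satisfies $\phi(x) \leq_T x$ so that $f(\phi(x))$ is "available from" $x$ in the sense needed, and then to take the Posner--Robinson witness $y$ to be (a code for) $\phi(x)$ together with $z_0$ and $x$-information, so that $f(x)$ in the hypothesis plays no essential role and the computation of $(y \oplus x)'$ goes through $J$. I expect to need the recursion theorem once more to close the loop on the index defining the relevant Turing functional, and I expect that $\mathbf{\Pi}^1_1$-determinacy (via Theorem~\ref{thm:pointedperfecttree}) will not be needed for this particular lemma — it should be a pure computability argument, with determinacy entering only in the subsequent step that assembles Lemmas~\ref{lem:counterexample1} and~\ref{lem:counterexample2} into a genuine Borel counterexample.
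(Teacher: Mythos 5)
You correctly locate the obstacle — the reduction $J(x)=\psi(f(\phi(x)),x)$ gives control over $f$ evaluated at $\phi(x)$, while the hypothesis of the lemma speaks of $f(x)$ — but then you pursue a dead end trying to resolve it. Engineering $x$ so that $\phi(x)\leq_T x$ (which, incidentally, is automatic for any $x\geq_T z_0$, so the pointed-perfect-tree and genericity machinery is superfluous), or arguing that ``$f(x)$ plays no essential role,'' or invoking the recursion theorem, none of these touch the actual mismatch: the hypothesis requires $f(x)\oplus y\oplus x\geq_T (y\oplus x)'$, and nothing in the reduction relates $f(x)$ to $f(\phi(x))$ for the same $x$. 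As written the sketch does not close.

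The fix is a single reorientation that you miss: instantiate the universal quantifier of the hypothesis at the \emph{output} of $\phi$, not at its input. Let $z$ be an oracle computing $\phi$ and $\psi$, set $x\defeq\phi(z)$ (this is the ``$x$'' of the lemma) and $y\defeq z$. Then $x=\phi(z)\leq_T z$, so $y\oplus x\equiv_T z$ and $(y\oplus x)'\equiv_T z'$. On the other hand $z'=J(z)=\psi(f(\phi(z)),z)=\psi(f(x),z)\leq_T f(x)\oplus z\leq_T f(x)\oplus y\oplus x$, which is exactly the Posner--Robinson inequality the hypothesis forbids. That is the entire proof: no recursion theorem, no fixed point, no tree. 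You are correct that determinacy is not needed here, but the route you sketch would not have reached the conclusion.
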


\begin{proof}
  For contradiction, assume that $J \leq_w f$. Thus there are partial continuous functions $\phi$ and $\psi$ such that for all $x$, $J(x) = \psi(f(\phi(x)), x)$. Let $z$ be an oracle relative to which $\psi$ and $\phi$ are computable. Let $x = \phi(z)$. By assumption, there is no $y$ such that $f(x)\oplus y \oplus x \geq_T (y \oplus x)'$. We claim this is contradicted by taking $y = z$.

To see why, note that since $\phi$ and $\psi$ are computable relative to $z$, we have
\[
  x = \phi(z) \leq_T z \qquad\text{and}\qquad \psi(f(x), z) \leq_T f(x)\oplus z
\]
and by our choice of $\phi$ and $\psi$ we have
\[
  z' = \psi(f(\phi(z)), z) = \psi(f(x), z).
\]
Hence we have
\[
  (z\oplus x)' \equiv_T z' = \psi(f(x), z) \leq_T f(x)\oplus z \leq_T f(x)\oplus z\oplus x.
\]
which yields the contradiction.
\end{proof}

We are now left with the problem of finding some $f\colon B \to \omega^\omega$ with the properties described above. Of course, it is easy to find such an $f$ using the Axiom of Choice, but there is no reason to believe that a function chosen in this way will be Borel and hence we cannot apply Theorem~\ref{thm:solecki3}. Instead of using choice, we could try to appeal to a uniformization theorem from descriptive set theory. However, the relation that we need to uniformize, namely
\[
  \{(x, y) \mid y \nleq_T x \text{ and } \forall z\, (y\oplus z \oplus x \ngeq_T (z \oplus x)'\},
\]
is $\Pi^1_1$ and thus too complicated for any of the standard uniformization theorems to give us a Borel---or even analytic---uniformizing function.

We will now see how to find some $f$ with the necessary properties which is Borel (in fact, Baire class 1). The key step is the following lemma.

\begin{lemma}
\label{lem:lowfailure}
Suppose that for cofinally many $x$, the Posner-Robinson theorem fails relative to $x$. Then for cofinally many $x$, there is some $y \leq_T x'$ which witnesses this failure---i.e.\ such that $y \nleq_T x$ and there is no $z$ such that $y\oplus z\oplus x \geq_T (z\oplus x)'$.
\end{lemma}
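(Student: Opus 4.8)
The plan is to prove this by a purely computability-theoretic argument, with no appeal to determinacy (determinacy enters only afterwards, to extract a pointed perfect tree from the set this lemma produces).

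First I would reduce the statement to a cone-avoiding jump inversion. Fix an arbitrary real $r$. By hypothesis there is some $x^* \geq_T r$ relative to which the Posner-Robinson theorem fails, i.e.\ there is a witness $y^*$ with $y^* \nleq_T x^*$ and, for every $z$, $y^* \oplus z \oplus x^* \not\geq_T (z\oplus x^*)'$. The key observation is that the ``for every $z$'' clause of being a witness is inherited upward: if $\hat x \geq_T x^*$ then, writing $w = z\oplus \hat x$, one has $y^*\oplus z\oplus \hat x \equiv_T y^*\oplus w\oplus x^*$ and $(z\oplus\hat x)' \equiv_T (w\oplus x^*)'$ (both because $\hat x \geq_T x^*$), so $y^*\oplus z\oplus\hat x \not\geq_T (z\oplus\hat x)'$ follows from the corresponding fact about $x^*$. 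Consequently, the moment we produce a real $\hat x$ with $x^* \leq_T \hat x$, $\;y^* \nleq_T \hat x$, and $y^* \leq_T \hat x'$, the real $y^*$ is a witness to the failure of the Posner-Robinson theorem relative to $\hat x$ which satisfies $y^* \leq_T \hat x'$; since $\hat x \geq_T r$ and $r$ was arbitrary, this gives the cofinally many $x$ demanded by the lemma.

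It therefore remains to show: for any reals $x^*, y^*$ with $y^* \nleq_T x^*$, there is $\hat x \geq_T x^*$ with $y^* \nleq_T \hat x$ but $y^* \leq_T \hat x'$ --- a jump inversion with built-in cone avoidance. I would build $\hat x = x^* \oplus G$ by a finite-extension argument, where a condition records a finite piece of $G$ together with a finite set of ``frozen'' columns of $G$ that are henceforth kept empty. Dedicating a computable family of ``signal'' columns of $G$ and a fixed computable index $g$ for which $\Phi_{g(n)}^{\hat x}{\downarrow}$ iff the $n$-th signal column of $\hat x$ contains a $1$, I would code $y^*$ into $\hat x'$ in the usual way: at stage $n$ I place a $1$ into the $n$-th signal column if $y^*(n)=1$ and freeze that column if $y^*(n)=0$, so that $n\mapsto y^*(n)$ becomes computable from $\hat x'$. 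On the remaining columns I would, for each $e$, meet the requirement $R_e\colon \Phi_e^{\hat x}\neq y^*$, giving $y^* \nleq_T \hat x$; and $x^* \leq_T \hat x$ is immediate.

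The point on which everything turns is that each $R_e$ can always be met. Working below the current (finite) condition $c$: if some extension of $c$ forces $\Phi_e^{x^*\oplus\,\cdot}$ to converge to a value $\neq y^*(m)$ at some $m$, or forces divergence at some $m$, pass to it (in the second case simply keep $c$, which already forces that divergence along $\hat x$). Otherwise every extension $\tau$ of $c$ has $\Phi_e^{x^*\oplus\tau}$ converging everywhere and agreeing with $y^*$ wherever defined; but then, since $c$ is finite, searching over such $\tau$ for a convergent computation at each $m$ and outputting its value computes $y^*$ from $x^*$ alone, contradicting $y^* \nleq_T x^*$. Thus the bad case never arises, and --- using that the final $\hat x$ extends $c$ and that every initial segment of its free part is again an extension of $c$ --- the first two cases genuinely satisfy $R_e$. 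I expect this dichotomy, that non-computability of $y^*$ from $x^*$ is precisely what licenses the diagonalization, to be the heart of the proof; the recursion-theoretic setup of the signal indices, the interleaving of the coding and diagonalization requirements, and the verification that $\hat x'$ recovers $y^*$ are routine bookkeeping. (Alternatively, $\hat x$ could be obtained by combining the Friedberg--Sacks jump-inversion theorem with a standard cone-avoidance argument.)
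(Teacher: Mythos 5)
Your proposal is correct and follows essentially the same route as the paper's proof: first observe that being a Posner--Robinson witness is inherited upward (so it suffices to find, above an arbitrary base, a real whose jump computes but which does not itself compute a fixed witness), then carry out a Friedberg-style jump inversion with cone avoidance via finite extensions. Your forcing setup (signal columns plus a separate family of requirements $R_e$) differs only cosmetically from the paper's bit-by-bit interleaving of coding and diagonalization, and the central dichotomy---diagonalize, or else compute the witness from the base, a contradiction---is the same.
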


\begin{proof}
Let $w \in \omega^\omega$ be arbitrary. We need to show that there is some $x \geq_T w$ such that the Posner-Robinson theorem fails relative to $x$ and such that this failure is witnessed by some $y \leq_T x'$. By increasing $w$ if necessary (and invoking our assumption that the Posner-Robinson theorem fails cofinally), we may assume the Posner-Robinson theorem fails relative to $w$. Let $y$ be a witness to the failure of the Posner-Robinson theorem relative to $w$.

The key observation is that it is sufficient to find some $x \geq_T w$ such that $x'$ computes $y$ but $x$ does not. Suppose we can find such an $x$. We claim that the Posner-Robinson theorem fails relative to $x$ and that this failure is witnessed by $y$. If not, then we can find some $z$ such that $y\oplus z\oplus x\geq_T (z\oplus x)'$. But since $x \geq_T w$, this gives us
\[
  y\oplus z\oplus x \oplus w \equiv_T y\oplus z\oplus x \geq_T (z\oplus x)' \equiv_T (z\oplus x\oplus w)'
\]
and hence $y$ is \emph{not} a witness to the failure of the Posner-Robinson theorem relative to $w$.

We will now explain how to construct $x$. In fact, we will actually construct a real $x_0$ such that $(x_0 \oplus w)'$ computes $y$ but $x_0 \oplus w$ does not and then set $x = x_0\oplus w$. The construction is similar to the proof of Friedberg jump inversion. We will construct $x_0$ by finite initial segments. On step $e$ of the construction, we will make sure that $\Phi_e^{x_0\oplus w}$ does not correctly compute $y$ and then code one more digit of $y$.

Suppose we are on step $e$ of the construction and the initial segment of $x_0$ that we have built so far is $\sigma_e$. There are two cases to consider.
\begin{enumerate}[(1)]
\item There is some $n \in \omega$ and strings $\tau_0, \tau_1$ such that $\Phi_e^{\sigma_e\concat\tau_0\oplus w}(n)\downarrow \neq \Phi_e^{\sigma_e\concat\tau_1\oplus w}(n)\downarrow$. In this case, one of these two values must disagree with $y(n)$. Let $\langle n, \tau_0, \tau_1\rangle$ be the first such triple discovered in some $w$-computable search and set $\sigma_{e + 1} = \sigma_e\concat\tau_i\concat\seq{y(e)}$ where $\tau_i$ is equal to whichever of $\tau_0, \tau_1$ causes $\Phi_e^{x_0\oplus w}(n)$ to disagree with $y(n)$.
\item For every $n \in \omega$, there is at most one value of $\Phi_e^{\sigma_e\concat\tau\oplus w}(n)$ obtainable over all strings $\tau$. Then by standard arguments, if $x_0$ extends $\sigma_e$ then either $\Phi_e^{x_0\oplus w}$ is not total or it computes a real which is computable from $w$ alone. In either case, $\Phi_e^{x_0\oplus w}$ cannot be equal to $y$ so we may simply set $\sigma_{e + 1} = \sigma_e\concat\seq{y(e)}$.
\end{enumerate}

It is clear from the construction that $x_0\oplus w$ does not compute $y$. To see that $(x_0 \oplus w)'$ computes $y$, simply note that $(x_0\oplus w)'$ can figure out what happened at each step of the construction described above (i.e.\ it can check which of the two cases held at each step and, in the first case, recover the triple $\langle n, \tau_0, \tau_1\rangle$) and can thus recover the digits of $y$ coded during the construction.
\end{proof}

We can now prove Theorem~\ref{thm:pr3}. As we mentioned above, our proof uses $\mathbf{\Pi}^1_1$-determinacy.

\begin{proof}[Proof of Theorem~\ref{thm:pr3} from Theorem~\ref{thm:solecki3}]
Suppose Theorem~\ref{thm:pr3} fails. Then as we saw above, the set
\[
  B = \{x \in \omega^\omega \mid \text{the Posner-Robinson theorem fails relative to $x$}\}
\]
is cofinal in the Turing degrees and hence by Lemma~\ref{lem:lowfailure}, the following set is also cofinal
\begin{align*}
  C = \{x \in \omega^\omega \mid \exists y \leq_T x'\, (&y \text{ witnesses the failure of the}\\
  &\text{Posner-Robinson theorem relative to $x$})\}.
\end{align*}
Now for each $e \in \omega$, define
\begin{align*}
  C_e = \{x \in \omega^\omega \mid \Phi_e(x') &\text{ is total and witnesses the failure of the}\\
  &\text{ Posner-Robinson theorem relative to $x$}\}
\end{align*}
and note that $C = \bigcup_{e \in \omega}C_e$. Hence by Theorem~\ref{thm:pointedperfecttree}, there is some pointed perfect tree $T$ and $e \in \omega$ such that $[T] \subseteq C_e$.

Define $f \colon \omega^\omega \to \omega^\omega$ by
\[
  f(x) =
  \begin{cases}
    \Phi_e(x') &\text{if } x \in [T]\\
    0 &\text{else.}
  \end{cases}
\]
Let's now make some observations about $f$.
\begin{enumerate}[(1)]
\item $f$ is clearly Borel---in fact, it is actually Baire class $1$.
\item For every $x \in [T]$, $f(x)$ is a witness to the failure of the Posner-Robinson theorem relative to $x$.
\item In particular, for any $x \in [T]$, $f(x) \nleq_T x$. Since $[T]$ is a cofinal set in the Turing degrees, $f$ satisfies the hypothesis of Lemma~\ref{lem:counterexample1}.
\item For any $x \in \omega^\omega$, there is no $y$ such that $f(x)\oplus y \oplus x \geq_T (y\oplus x)'$. If $x \in [T]$ then this is because $f(x)$ is a witness to the failure of the Posner-Robinson theorem relative to $x$. If $x \notin [T]$ then this is because $f(x)$ is computable. Hence $f$ satisfies the hypothesis of Lemma~\ref{lem:counterexample2}.
\end{enumerate}
Thus by Lemmas~\ref{lem:counterexample1} and~\ref{lem:counterexample2}, $f$ is a counterexample to Theorem~\ref{thm:solecki3}.
\end{proof}

\section{Generalizations}
\label{sec:generalizations}

Recently, Marks and Montalb\'{a}n have generalized the Solecki dichotomy (specifically, Theorem~\ref{thm:solecki2}) to higher levels of the Borel hierarchy \cite{marks2021dichotomy}. To state their result, we must introduce a few more definitions. First, we must generalize $\sigma$-continuity. Recall that for any countable ordinal $\alpha$, a function $f$ is \term{$\mathbf{\Sigma}^0_\alpha$-measurable} if for every open set $U$, $f^{-1}(U)$ is in $\mathbf{\Sigma}^0_\alpha$.

\begin{definition}
For any countable ordinal $\alpha$,
\begin{itemize}
\item $\Dec_\alpha$ denotes the set of functions $f \colon \omega^\omega \to \omega^\omega$ for which there is a partition $\{A_n\}_{n \in \omega}$ of $\omega^\omega$ into countably many pieces such that for each $n$, $f\uh_{A_n}$ is $\mathbf{\Sigma}^0_\alpha$-measurable with respect to the subspace topology on $A_n$.
\item $\Dec_{< \alpha}$ denotes the set of functions $f \colon \omega^\omega \to \omega^\omega$ for which there is a partition $\{A_n\}_{n \in \omega}$ of $\omega^\omega$ into countably many pieces such that for each $n$, $f \uh_{A_n}$ is $\mathbf{\Sigma}^0_\beta$-measurable for some $\beta < \alpha$ (note that $\beta$ may depend on $n$).
\end{itemize}
\end{definition}

Note that $\mathbf{\Sigma^0_1}$-measurable is the same as continuous, so a function $f$ is $\sigma$-continuous if and only if it is $\Dec_1 = \Dec_{< 2}$.

As a warning to readers, the notation in this area is not yet standardized and our notation does not quite match previously used notation. In particular, $\Dec_\alpha$ is sometimes denoted $\Dec(\mathbf{\Sigma}^0_\alpha)$ (see e.g.~\cite{gregoriades2021turing}). We have chosen the notation $\Dec_\alpha$ to be consistent with our chosen notation for $\Dec_{< \alpha}$, for which there does not seem to currently be any standard notation but which is necessary to correctly express Marks and Montalb\'{a}n's generalization of the Solecki dichotomy at limit levels of the Borel hierarchy.

For each countable ordinal $\alpha \geq 1$, we will use $J_\alpha \colon \omega^\omega \to \omega^\omega$ to denote the $\alpha^\text{th}$ Turing jump---i.e.\ $J_\alpha(x) = x^{(\alpha)}$. Note that technically $J_\alpha$ depends on a choice of presentation for $\alpha$. However, the versions of $J_\alpha$ that are obtained by choosing different presentations for $\alpha$ are all reducible to each other (in the sense of Definition~\ref{def:reducible}) and so this subtlety does not matter for us.

We can now state the appropriate generalization of Theorem~\ref{thm:solecki2} due to Marks and Montalb\'{a}n.

\begin{theorem}[Generalized Solecki dichotomy]
\label{thm:soleckigeneral}
For every countable ordinal $\alpha \geq 1$ and every Borel function $f\colon \omega^\omega \to \omega^\omega$, either $f$ is in $\Dec_{<(1 + \alpha)}$ or $J_\alpha \leq f$.
\end{theorem}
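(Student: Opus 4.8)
The plan is to prove the generalized dichotomy by transfinite induction on $\alpha$, with the base case $\alpha = 1$ supplied by the classical Solecki dichotomy (Theorem~\ref{thm:solecki2}, noting that $\Dec_{<(1+1)} = \Dec_1$ is $\sigma$-continuity and $J_1 = J$). The conceptual engine, replacing Observation~\ref{obs:sigmacontinuous}, is the following characterization of the ``easy'' side of the dichotomy: a Borel $f$ lies in $\Dec_{<(1+\alpha)}$ if and only if there is a real $z$ such that for every $x$ there is some $\beta < \alpha$ with $f(x) \leq_T (x\oplus z)^{(\beta)}$. I would establish this first. For the forward direction, given a witnessing partition $\{A_n\}$ with $f\uh_{A_n}$ of Baire class $\beta_n < \alpha$ in the subspace topology, the standard correspondence between the Baire and hyperarithmetic hierarchies gives oracles $z_n$ with $f\uh_{A_n}(x) \leq_T (x\oplus z_n)^{(\beta_n)}$ for $x \in A_n$; put $z = \bigoplus_n z_n$ and note that $z$ need not decide the pieces. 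For the reverse direction, the graphs $A_{\beta,e} = \{x \mid f(x) = \Phi_e^{(x\oplus z)^{(\beta)}}\}$ with $\beta < \alpha$, $e \in \omega$ cover $\omega^\omega$, there are countably many of them because $\alpha$ is countable, and $f$ restricted to each is $\mathbf{\Sigma}^0_{1+\beta}$-measurable in the subspace topology since $x \mapsto (x\oplus z)^{(\beta)}$ is Baire class $\beta$. (Limit levels require fixing a presentation of $\alpha$ and are exactly what makes the ``$<$'' in $\Dec_{<(1+\alpha)}$, rather than $\Dec_{1+\alpha}$, the correct index; for $\alpha = 1$ the characterization is precisely Observation~\ref{obs:sigmacontinuous}.)

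Granting the characterization, I would reduce the theorem to a \emph{uniform jump inversion} statement. It suffices to produce a total continuous $\phi\colon\omega^\omega\to\omega^\omega$ from which $x$ is uniformly computable from $\phi(x)$ (a single index, e.g.\ because $\phi$ enumerates the branches of a fixed pointed perfect tree) together with a single index $e$ such that $\Phi_e^{f(\phi(x))} = \phi(x)^{(\alpha)}$ for all $x$. Indeed, since ``$x$ computable from $\phi(x)$'' relativizes uniformly through the $\alpha$-jump, there is then a fixed index $d$ with $\Phi_d^{\phi(x)^{(\alpha)}} = x^{(\alpha)}$, and setting $\psi(w) = \Phi_d^{\Phi_e^{w}}$ gives a partial continuous $\psi$ with $\psi(f(\phi(x))) = x^{(\alpha)}$, i.e.\ $J_\alpha \leq f$. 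Crucially $\psi$ consults only $f(\phi(x))$ together with the hardcoded parameters, which is what the full reducibility $\leq$ requires, as opposed to the weaker $\leq_w$.

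To produce such a $\phi$ and $e$ from the assumption $f \notin \Dec_{<(1+\alpha)}$: by the characterization this assumption says that no single oracle witnesses ``$f(x)$ is hyperarithmetically simple over $x$ at a level below $\alpha$'' for all $x$, and a Lemma~\ref{lem:lowfailure}-style normalization upgrades this to a cofinal set of $x$ carrying such a failure together with a witness that is uniformly presented (low over $x^{(\alpha)}$). A transfinite, Posner--Robinson/Friedberg-jump-inversion-style construction---run by transfinite recursion on $\alpha$, coding the successive jumps $x^{(\beta)}$, $\beta < \alpha$, while diagonalizing so that $f$ of the constructed real computes the next jump---then converts this failure of simplicity into genuine $\alpha$-jump-likeness: it yields, for each real $r$, a real $u_r$ depending continuously on $r$, all the $u_r$ lying on a common pointed perfect tree $T$, with $f(u_r)$ uniformly computing $u_r^{(\alpha)}$. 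Here Borel determinacy enters through Theorem~\ref{thm:pointedperfecttree}: the starting points from which the construction succeeds form a cofinal set, which decomposes by candidate decoding functional into countably many Borel pieces, so one piece contains $[T]$ for a pointed perfect tree $T$ on which a single index $e$ works; setting $\phi(x) := u_x$ then completes the argument.

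The main obstacle is precisely this construction, and above all its \emph{uniformity}: the decoding of $u_r^{(\alpha)}$ from $f(u_r)$ must be carried out by a single Turing functional that never consults $r$ or $u_r$, which is exactly the feature distinguishing the required $\leq$ from the $\leq_w$ delivered by the game method of Section~\ref{sec:prtosolecki} (there the post-processor is allowed the extra input $x$ precisely because it has no other way to recover the oracle it needs). Two further points demand care: at limit $\alpha$ one must interleave the coding of $x^{(\beta)}$ for cofinally many $\beta < \alpha$ while keeping the approximation inside a pointed tree, and throughout one must cope with the pieces of a $\Dec_{<(1+\alpha)}$-decomposition being neither uniformly Borel nor decided by the oracle $z$ of the characterization. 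Organizing the jump-coding one ordinal at a time, via the transfinite induction, is what keeps this under control; this is where the Marks--Montalb\'{a}n argument does its real work.
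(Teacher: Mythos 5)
First, a point of orientation: the paper does not prove Theorem~\ref{thm:soleckigeneral} at all --- it is quoted as a black box from Marks and Montalb\'{a}n \cite{marks2021dichotomy}. So your proposal is not competing with an in-paper argument; it has to stand on its own, and as written it does not. Your opening move (the characterization of $\Dec_{<(1+\alpha)}$ as ``there is a single $z$ with $f(x) \leq_T (x\oplus z)^{(\beta)}$ for some $\beta<\alpha$, for every $x$'') and the reduction to a uniform jump-inversion statement are the right shape, modulo the off-by-one bookkeeping you acknowledge. But the entire content of the theorem is then concentrated in the construction you describe only in outline: a continuous family $r \mapsto u_r$ on a pointed perfect tree with a \emph{single} index $e$ such that $\Phi_e^{f(u_r)} = u_r^{(\alpha)}$. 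You flag this as ``the main obstacle'' and attribute it to ``where the Marks--Montalb\'{a}n argument does its real work,'' which is another way of saying the proof is not given.

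There are two concrete reasons to doubt that the Posner--Robinson/Friedberg-style construction you invoke can be made to work as described. First, jump inversion builds the base real $u$ so as to code a \emph{given, fixed} target into $u'$; here the object that must compute $u^{(\alpha)}$ is $f(u)$, which changes as $u$ is being built and over which you have no control --- the hypothesis $f \notin \Dec_{<(1+\alpha)}$ only tells you that $f(x)$ is cofinally \emph{not hyperarithmetically below} $x$, which is very far from $f(u) \geq_T u^{(\alpha)}$. Your sketch never explains how this circularity is broken. Second, and more tellingly, the decoding functional in your scheme may consult only $f(\phi(x))$ (plus hardcoded parameters), never $x$ or $\phi(x)$; this is exactly the uniformity that separates $\leq$ from $\leq_w$, and the paper's Section~\ref{sec:prtosolecki} shows that Posner--Robinson-type arguments naturally deliver only $\leq_w$ (the post-processor in the game $G(f,g)$ is given $x$ precisely because it cannot otherwise recover its oracle). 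Indeed, the paper explicitly poses as an open question whether the Posner--Robinson theorem plus determinacy can yield the dichotomy with full reducibility (Theorems~\ref{thm:solecki1} and~\ref{thm:solecki2}); your proposal asserts, without details, a positive answer to the generalized form of that question. Until the construction of $\phi$ and $e$ is actually carried out --- including the verification that the relevant success sets are Borel so that Theorem~\ref{thm:pointedperfecttree} applies --- this is a plan, not a proof.
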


There is also a generalization of the Posner-Robinson theorem to higher levels of the hyperarithmetic hierarchy, due to Slaman and Shore \cite{shore1999defining}.

\begin{theorem}[Generalized Posner-Robinson theorem]
For all computable ordinals $\alpha$ and all reals $x$, either $x \leq_T 0^{(\beta)}$ for some $\beta < \alpha$ or there is some real $y$ such that $x \oplus y \geq_T y^{(\alpha)}$.
\end{theorem}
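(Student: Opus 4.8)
The plan is to prove the statement by transfinite recursion on $\alpha$, along a fixed computable presentation, in the following relativized and strengthened form: for every real $z$, every $z$-computable ordinal $\alpha \geq 1$, and every real $x$, either $x \leq_T z^{(\beta)}$ for some $\beta < \alpha$, or there is a real $y$ with $x \oplus y \oplus z \geq_T (y \oplus z)^{(\alpha)}$ which is moreover ``sufficiently generic'' over $x \oplus z^{(\alpha)}$ (Cohen-generic for a suitable $\alpha$-indexed hierarchy of forcings) --- in particular generic enough that $(y \oplus z)^{(\gamma)} \equiv_T (y \oplus z) \oplus z^{(\gamma)}$ for every $\gamma \leq \alpha$. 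Setting $z = 0$ (the case $\alpha = 0$ being trivial) gives the displayed theorem. Carrying the extra genericity through the recursion is what keeps the jumps from interacting and what lets one infer, from $x \nleq_T z^{(\beta)}$, that $x \oplus y \oplus z \nleq_T (y \oplus z)^{(\beta)}$.

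Base case $\alpha = 1$: this is the classical Posner--Robinson theorem, which I would prove by a finite-extension forcing construction of $y$ aimed at making $y'$ computable from $x \oplus y$ --- for each $e$ one must arrange that $x \oplus y$ can decide whether $\Phi_e^y(e)\!\downarrow$. The one place the hypothesis $x \nleq_T z$ (here $z = 0$) is used is in resolving, at each stage, the $\Sigma^0_1/\Pi^0_1$ dichotomy ``does some extension of the current condition force $\Phi_e^y(e)$ to converge?'': because $x$ is not computable, a bit of $x$ can be folded into the choice of extension so that which side of the dichotomy held becomes recoverable from $x \oplus y$, rather than requiring $0'$. Routine bookkeeping then yields $y' \leq_T x \oplus y$, and one checks the witness may be taken generic.

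Successor step $\alpha = \beta + 1$: assume $x \nleq_T z^{(\gamma)}$ for all $\gamma \leq \beta$. By the induction hypothesis at level $\beta$ there is a suitably generic $y_0$ with $x \oplus y_0 \oplus z \geq_T (y_0 \oplus z)^{(\beta)}$; genericity gives $(y_0 \oplus z)^{(\beta)} \equiv_T y_0 \oplus z \oplus z^{(\beta)}$, and then $x \nleq_T z^{(\beta)}$ yields $x \oplus y_0 \oplus z \nleq_T (y_0 \oplus z)^{(\beta)}$. Now apply the base case relative to the oracle $W = (y_0 \oplus z)^{(\beta)}$, with the real $X = x \oplus y_0 \oplus z$ (which is $\nleq_T W$): there is $g$, simultaneously generic over $W$ and over the $\beta$-jump hierarchy of $y_0 \oplus z$, with $X \oplus g \oplus W \geq_T (g \oplus W)'$. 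Put $y = y_0 \oplus g$. Using genericity of $g$ to get $(y \oplus z)^{(\beta)} \equiv_T W \oplus g$, and using $W \leq_T x \oplus y_0 \oplus z$ to absorb the extra oracle, one computes
\begin{align*}
  (y \oplus z)^{(\beta+1)} &\equiv_T (W \oplus g)' \leq_T X \oplus g \oplus W\\
  &\equiv_T x \oplus y_0 \oplus z \oplus g = x \oplus y \oplus z,
\end{align*}
and one checks that $y = y_0 \oplus g$ inherits the required genericity at all levels $\leq \beta + 1$.

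Limit step, and the main obstacle: for $\alpha$ a limit with cofinal sequence $\beta_0 < \beta_1 < \cdots \to \alpha$, the idea is to build $y$ as an $\omega$-fold join of generics that successively ``add'' the levels $\beta_n$ in the manner of the successor step, each new generic chosen generic over all that came before, so that for each $n$ the real $x \oplus y \oplus z$ computes $(y \oplus z)^{(\beta_n)}$ (the earlier reduction survives because the later generics only join on, and they are columns of $y$), hence computes $(y \oplus z)^{(\alpha)}$. Making this precise is where I expect the real difficulty to lie: one must run an $\omega$-length iteration of forcings while keeping every genericity requirement satisfiable (a fusion argument, chosen with foresight), and --- crucially --- one must make the reduction of $(y \oplus z)^{(\beta_n)}$ to $x \oplus y \oplus z$ uniform in $n$, so that it assembles into a single reduction of $(y \oplus z)^{(\alpha)} = \bigoplus_{\beta < \alpha} (y \oplus z)^{(\beta)}$. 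In effect this requires the kind of master iterated-forcing construction used by Shore and Slaman in their work on defining the Turing jump~\cite{shore1999defining}, with $x$'s failure to be computed by $z^{(\beta)}$ exploited level-by-level so that the ``$\alpha$-forcing oracle'' deciding $(y \oplus z)^{(\alpha)}$ is visible from $x \oplus y \oplus z$ rather than only from $z^{(\alpha)}$; getting the genericity, the coding of $x$, and this uniformity to coexist is the technical heart of the proof.
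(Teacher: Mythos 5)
This theorem is not proved in the paper at all --- it is quoted as a known result of Shore and Slaman, so there is no internal proof to compare against; I am judging your sketch on its own terms. Your base case and successor step are plausible in outline (the standard Posner--Robinson finite-extension argument, then relativizing it to $W=(y_0\oplus z)^{(\beta)}$ and using genericity of $y_0$ over $x\oplus z^{(\beta)}$ to get $X\nleq_T W$; the bookkeeping identity $(y_0\oplus g\oplus z)^{(\beta)}\equiv_T(y_0\oplus z)^{(\beta)}\oplus g$ for sufficiently generic $g$ is what makes the computation go through). But the limit case is where the entire content of the theorem lives for $\alpha\geq\omega$, and your proposal explicitly stops short there: ``making this precise is where I expect the real difficulty to lie'' is an accurate self-assessment, not a proof. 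Concretely, two things break in the iteration scheme. First, even if each reduction $(y\oplus z)^{(\beta_n)}\leq_T x\oplus y\oplus z$ is secured at stage $n$, you need a \emph{single} index, uniform in $n$, to compute $(y\oplus z)^{(\alpha)}=\bigoplus_{\beta<\alpha}(y\oplus z)^{(\beta)}$; the base-case construction hands you a different reduction procedure at each stage, chosen by a search that itself depends on the stage, and nothing in the finite-extension framework forces these to cohere. Second, each reduction established at stage $n$ must survive the joining-on of all later generics $g_m$, $m>n$; the stage-$n$ argument computes $(y_{\leq n}\oplus z)^{(\beta_n)}$ from $x\oplus y_{\leq n}\oplus z$, and upgrading this to a computation of $(y\oplus z)^{(\beta_n)}$ for the full join $y$ requires the later columns to be generic over towers of jumps that are only determined after they are chosen --- a circularity that a fusion argument must be designed in advance to break, which you have not done.

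This is precisely why the actual proof (Shore--Slaman) does not iterate the $\alpha=1$ case. It uses a single forcing notion --- Kumabe--Slaman forcing, whose conditions are finite use-monotone Turing functionals $\Phi$ paired with finite sets of reals to be avoided --- and builds one functional that computes $g^{(\alpha)}$ from $x\oplus g$ directly. The role of the hypothesis $x\nleq_T 0^{(\beta)}$ for all $\beta<\alpha$ is to show, level by level along the given presentation of $\alpha$, that the $\Sigma_\beta$-complete questions about the generic (``is there an extension forcing such-and-such?'') can be answered by $x\oplus g$ because the answers are coded into the generic at positions only $x$ can locate; the uniformity across $\beta<\alpha$ is built into the single functional rather than assembled after the fact. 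If you want to complete your write-up, you should either import that machinery or find a genuinely new way to obtain the uniformity at limits; as it stands the proposal has a real gap exactly at the point you identified.
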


As usual, there is also a relativized version.

\begin{theorem}
\label{thm:prgeneral}
For all reals $z$, all ordinals $\alpha$ which are computable relative to $z$ and all reals $x$, either $x \leq_T z^{(\beta)}$ for some $\beta < \alpha$ or there is some real $y$ such that $x \oplus y \oplus z \geq_T (y \oplus z)^{(\alpha)}$.
\end{theorem}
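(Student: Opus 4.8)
The plan is to obtain Theorem~\ref{thm:prgeneral} as the routine relativization of the unrelativized generalized Posner--Robinson theorem of Shore and Slaman. Fix a real $z$. Recall that an ordinal $\alpha$ is computable relative to $z$ precisely when $\alpha = |a|$ for some $a$ in Kleene's $\mathcal{O}^z$, and that $z^{(\alpha)}$ is then defined as the iteration of the Turing jump along the $z$-computable well-order coded by $a$. The first step is to record the standard fact that, for $\alpha$ computable in $z$, the Turing degree of $z^{(\alpha)}$ depends only on $\alpha$ and not on the choice of notation $a$, so that the statement of the theorem is well posed; the authors already invoke the analogous remark for $J_\alpha$ above, so I would treat this point similarly.

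With this in place, the main step is to observe that the proof of the unrelativized theorem is a purely computability-theoretic construction --- a forcing/finite-extension argument iterated along the notation for $\alpha$ --- which relativizes verbatim once $z$ is added as a parameter to every Turing functional, every ordinal notation, and every instance of the jump. Concretely, I would carry this out in the following order: (i) set up the forcing conditions and the notion of genericity relative to $\bigoplus_{\beta < \alpha} z^{(\beta)}$; (ii) handle the first alternative, showing that if $x$ is ``too simple'' then already $x \leq_T z^{(\beta)}$ for some $\beta < \alpha$; (iii) otherwise build $y$ by finite $z$-computable extensions so that, at each level $\beta < \alpha$, the construction simultaneously diagonalizes against $(y\oplus z)^{(\beta)}$ being computed too cheaply and codes enough information that $(y \oplus z)^{(\alpha)} \leq_T x \oplus y \oplus z$ --- exactly as in the proof of Lemma~\ref{lem:lowfailure}, but now iterated transfinitely along $a$ and relativized to $z$. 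The replacement of $0^{(\beta)}$ by $z^{(\beta)}$ and of $y^{(\alpha)}$ by $(y \oplus z)^{(\alpha)}$ is forced by this relativization: since we are everywhere working over $z$, the relevant oracle whose jump we iterate is $y \oplus z$ rather than $y$.

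The main obstacle is the bookkeeping at limit stages of the iteration along the $\mathcal{O}^z$-notation $a$: one must check that the construction remains uniformly $z$-computable (so that the ``stage $e$'' information can be recovered by the appropriate relativized jump, as in the last paragraph of the proof of Lemma~\ref{lem:lowfailure}), and that the coding and diagonalization requirements interleave correctly when $\alpha$ is not a successor. Since all of this is standard and appears, unrelativized, in \cite{shore1999defining}, an alternative I would consider is to simply cite that paper together with an explicit one-sentence remark that its proof relativizes, rather than reproduce the construction in full.
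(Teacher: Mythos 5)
The paper does not give a proof of Theorem~\ref{thm:prgeneral} at all: it states the unrelativized generalized Posner--Robinson theorem with a citation to Shore and Slaman \cite{shore1999defining}, and then simply writes ``As usual, there is also a relativized version'' and records the statement. That is exactly your stated alternative --- cite \cite{shore1999defining} and remark that its proof relativizes --- so your proposal is consistent with, indeed slightly more explicit than, the paper's treatment.

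One small caveat on the fuller sketch you give. You describe the argument as a finite-extension construction ``iterated along the notation for $\alpha$ \ldots exactly as in the proof of Lemma~\ref{lem:lowfailure}, but now iterated transfinitely.'' The actual Shore--Slaman argument is not a transfinite nesting of Friedberg-style jump inversions; it is a single Kumabe--Slaman forcing construction in which one builds a generic $y$ so that $y^{(\alpha)}$ (here $(y\oplus z)^{(\alpha)}$) is coded into $x\oplus y$ (here $x\oplus y\oplus z$), and the first horn of the dichotomy arises as the hypothesis that makes the coding go through, not as a separate diagonalization phase. This does not undermine your proposal --- the bottom line, that the theorem follows by routine relativization of \cite{shore1999defining} in the parameter $z$ (replacing $0^{(\beta)}$ by $z^{(\beta)}$, $y^{(\alpha)}$ by $(y\oplus z)^{(\alpha)}$, and restricting $\alpha$ to ordinals with a notation in $\mathcal{O}^z$) --- is precisely what the paper relies on.
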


The main results of this paper also hold for these generalizations of the Solecki dichotomy and the Posner-Robinson theorem. In particular, we can introduce weakened versions of Theorems~\ref{thm:soleckigeneral} and~\ref{thm:prgeneral}:

\begin{theorem}
For every countable ordinal $\alpha \geq 1$ and every Borel function $f\colon \omega^\omega \to \omega^\omega$, either $f$ is in $\Dec_{<(1 + \alpha)}$ or $J_\alpha \leq_w f$.
\end{theorem}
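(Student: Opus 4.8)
The plan is to replay the argument of Section~\ref{sec:prtosolecki} almost verbatim, replacing the Turing jump $J$ by the $\alpha$-th jump $J_\alpha$ and the cone version of the Posner--Robinson theorem by the relativized generalized Posner--Robinson theorem, Theorem~\ref{thm:prgeneral}. Fix a countable ordinal $\alpha \geq 1$ together with a presentation of $\alpha$; as in the remarks preceding Theorem~\ref{thm:soleckigeneral} the choice will not matter, and we tacitly assume that any oracles occurring below compute this presentation. The game $G(f,g)$ and Lemma~\ref{lem:abovejump} are reused unchanged: player~2 wins $G(J_\alpha, f)$ if and only if $J_\alpha \leq_w f$. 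Thus the content of the proof lies in generalizing Observation~\ref{obs:sigmacontinuous} and Lemma~\ref{lem:sigmacontinuous}.

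The generalized observation reads: $f$ lies in $\Dec_{<(1+\alpha)}$ if and only if there is a real $r$ such that for every $x$ there is some $\beta < \alpha$ with $f(x) \leq_T (x \oplus r)^{(\beta)}$. (For $\alpha = 1$ this is exactly Observation~\ref{obs:sigmacontinuous}.) For the forward direction, take a partition $\{A_n\}$ witnessing $f \in \Dec_{<(1+\alpha)}$, with $f\uh_{A_n}$ being $\mathbf{\Sigma}^0_{\gamma_n}$-measurable for some $\gamma_n < 1+\alpha$; write $\gamma_n = 1 + \beta_n$ with $\beta_n < \alpha$ (possible at every level, finite or transfinite, by the arithmetic of ordinal addition), and invoke the standard effective-descriptive-set-theory fact that a function which is $\mathbf{\Sigma}^0_{1+\beta}$-measurable on a subspace $A$ is, relative to a suitable oracle $w$, computable from $(x\oplus w)^{(\beta)}$ uniformly for $x \in A$ --- here one uses that the preimages of basic open sets under $f\uh_A$ are pairwise disjoint, so $f(x)$ can be recovered by dovetailing the relevant $\Sigma^0_{1+\beta}(x\oplus w)$ searches, which are semidecidable from $(x\oplus w)^{(\beta)}$. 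Taking $r$ to be the join of the oracles obtained for the various $A_n$ gives the forward direction. For the converse, enlarge $r$ so that it computes the presentation of $\alpha$, set $A_{e,\beta} = \{x \mid f(x) = \Phi_e((x\oplus r)^{(\beta)})\}$ for $e \in \omega$ and $\beta < \alpha$ --- countably many sets covering $\omega^\omega$ --- disjointify them, and note that each $x \mapsto \Phi_e((x\oplus r)^{(\beta)})$ is $\mathbf{\Sigma}^0_{1+\beta}$-measurable (relative to $r$) while $1 + \beta < 1 + \alpha$, so $f \in \Dec_{<(1+\alpha)}$.

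The generalized form of Lemma~\ref{lem:sigmacontinuous} then states that player~1 wins $G(J_\alpha, f)$ if and only if $f \in \Dec_{<(1+\alpha)}$. For $(\impliedby)$, let $r$ witness the observation for $f$; player~1 interleaves the digits of $r$ with copies of player~2's moves, so the real $x$ played by player~1 computes $r \oplus y \oplus z$, and for the $\beta < \alpha$ with $f(y) \leq_T (y\oplus r)^{(\beta)}$ we get $\Phi_e(f(y), x, z) \leq_T f(y)\oplus x \leq_T x^{(\beta)} <_T x^{(\alpha)} = J_\alpha(x)$, so player~2 cannot win. For $(\implies)$, suppose player~1 wins via $\sigma$ and let $p$ be the presentation of $\alpha$; we claim $r = \sigma \oplus p$ witnesses the observation for $f$, which finishes the argument. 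If not, fix $x_0$ with $f(x_0) \nleq_T (x_0 \oplus r)^{(\beta)}$ for all $\beta < \alpha$ and apply Theorem~\ref{thm:prgeneral} relative to $u = x_0 \oplus r$ (with ordinal $\alpha$, which is $u$-computable, and real $f(x_0)$): the first alternative is exactly the negation of our assumption, so there is a real $v$ with $f(x_0)\oplus v\oplus u \geq_T (v\oplus u)^{(\alpha)}$. Now mimic the proof of Lemma~\ref{lem:sigmacontinuous}: playing as player~2 against $\sigma$, play a code $e$ (to be chosen by the recursion theorem), then ignore player~1's moves and play $x_0$ and $v \oplus u$; the real $x$ then played by player~1 is computable from $v\oplus u$, so $x^{(\alpha)} \leq_T (v\oplus u)^{(\alpha)} \leq_T f(x_0)\oplus(v\oplus u)$, and, crucially, this reduction is uniform in $e$ --- the computation of $(v\oplus u)^{(\alpha)}$ from $f(x_0)\oplus(v\oplus u)$ does not involve $e$, and the computation of $x$ from $v\oplus u$, hence of $x^{(\alpha)}$ from $(v\oplus u)^{(\alpha)}$, is uniform in $e$ by the uniform monotonicity of the $\alpha$-th jump --- so the recursion theorem supplies an $e$ with $\Phi_e(f(x_0), v\oplus u) = x^{(\alpha)} = J_\alpha(x)$, and playing this $e$ beats $\sigma$, a contradiction.

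Finally, for a Borel $f$ the payoff set of $G(J_\alpha, f)$ is Borel --- it is obtained from the Borel functions $f$ and $J_\alpha$ by continuous post-processing together with the $\mathbf{\Pi}^0_2$ condition that player~2 eventually plays every digit of $y$ --- so Borel determinacy yields a winning strategy for one of the players, and the two lemmas translate this into ``$f \in \Dec_{<(1+\alpha)}$'' or ``$J_\alpha \leq_w f$'', as required. The step deserving the most care is the generalized observation, in particular the effective-descriptive-set-theory fact relating $\mathbf{\Sigma}^0_{1+\beta}$-measurability to computability from the $\beta$-th jump and the bookkeeping around the ``$1+\alpha$'' index shift at limit ordinals; the transfinite uniformity used in the recursion-theorem step of the second lemma also needs to be stated carefully, but is routine.
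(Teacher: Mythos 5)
Your proposal is correct and takes exactly the route the paper intends: the paper proves this generalization by asserting that the argument of Section~\ref{sec:prtosolecki} goes through with $J$ replaced by $J_\alpha$ and the Posner--Robinson theorem replaced by its Shore--Slaman generalization, and you have simply carried that out in detail. You also correctly identified the two spots where care is genuinely needed (the generalized form of Observation~\ref{obs:sigmacontinuous} with the $1+\beta$ index bookkeeping, and the uniformity of the $\alpha$-th jump needed for the recursion-theorem step), which is precisely what "almost no modifications" hides.
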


\begin{theorem}
For every countable ordinal $\alpha$, there is some cone of Turing degrees, $C$, such that for all $z \in C$ and all reals $x$, $\alpha$ is computable relative to $z$ and either $x \leq_T z^{(\beta)}$ for some $\beta < \alpha$ or there is some real $y$ such that $x \oplus y \oplus z \geq_T (y \oplus z)^{(\alpha)}$.
\end{theorem}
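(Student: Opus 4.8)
The plan is to obtain this statement as an essentially immediate weakening of the generalized relativized Posner--Robinson theorem of Slaman and Shore (Theorem~\ref{thm:prgeneral}, \cite{shore1999defining}), in exactly the way that Theorem~\ref{thm:pr3} is a weakening of the ordinary relativized Posner--Robinson theorem. The only feature of the target statement not already present in Theorem~\ref{thm:prgeneral} is the clause asserting that $\alpha$ is computable relative to $z$, and the whole point is that this can be arranged uniformly for all $z$ in a single cone.

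Concretely, I would first fix, once and for all, a real $w$ coding a presentation of $\alpha$---for instance, let $w$ encode a well-ordering of $\omega$ of order type $\alpha$, which exists because $\alpha$ is countable. Then $\alpha$ is computable relative to $w$, and hence relative to any real that computes $w$. Take $C = \{z \in \omega^\omega \mid z \geq_T w\}$, the cone with base $w$. For each $z \in C$ we have $z \geq_T w$, so $\alpha$ is computable relative to $z$; this gives the first assertion of the theorem and simultaneously verifies the hypothesis of Theorem~\ref{thm:prgeneral} for this $z$. Applying Theorem~\ref{thm:prgeneral} with this $z$ and an arbitrary real $x$ then yields the required dichotomy: either $x \leq_T z^{(\beta)}$ for some $\beta < \alpha$, or there is a real $y$ with $x \oplus y \oplus z \geq_T (y \oplus z)^{(\alpha)}$.

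I expect there to be no real obstacle here; the only things to check are trivialities, e.g.\ that $y \oplus z \geq_T z$ still computes a presentation of $\alpha$ (so that $(y \oplus z)^{(\alpha)}$ is meaningful) and that each $\beta < \alpha$ is likewise computable relative to $z$ (so that $z^{(\beta)}$ is meaningful). For completeness I would also remark that one could instead give a self-contained proof paralleling Section~\ref{sec:soleckitopr}, deriving this theorem from the weak generalized Solecki dichotomy together with $\mathbf{\Pi}^1_1$-determinacy by systematically replacing $J$ with $J_\alpha$; in that approach the substantive step would be the analogue of Lemma~\ref{lem:lowfailure}---a jump-inversion construction producing cofinally many reals $x$ for which the failure of the level-$\alpha$ Posner--Robinson theorem relative to $x$ is witnessed by some $y \leq_T x^{(\alpha)}$ rather than merely by some $y \leq_T x'$. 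But for the statement as written, invoking Theorem~\ref{thm:prgeneral} directly is both shortest and sufficient.
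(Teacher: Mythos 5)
Your proof is correct, and it is the derivation the paper implicitly has in mind: the paper states this cone version (like Theorem~\ref{thm:pr3} before it) without a written proof, precisely because it is the trivial weakening of Theorem~\ref{thm:prgeneral} obtained by fixing a presentation $w$ of $\alpha$ and restricting to the cone above $w$, exactly as you do. The alternative you sketch at the end---re-running Section~\ref{sec:soleckitopr} with $J$ replaced by $J_\alpha$ under $\mathbf{\Pi}^1_1$-determinacy---is what the paper actually carries out in spirit, but that is in service of the equivalence with the weak generalized Solecki dichotomy rather than being needed to establish the theorem's truth.
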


The proofs in sections~\ref{sec:prtosolecki} and~\ref{sec:soleckitopr} work, with almost no modifications, to show that the two theorems above are equivalent.

\section*{Acknowledgments}

Thanks to Andrew Marks and Ted Slaman for useful conversations and advice.

\bibliographystyle{plain}
\bibliography{main}

\end{document}